\colorlet{MyBlue}{black}
\newtheorem{theorem}{Theorem}[section]
\newtheorem{definition}[theorem]{Definition}
\newtheorem{corollary}[theorem]{Corollary}
\theoremstyle{remark}
\numberwithin{equation}{section} 
\newlength\hrulethickness
\renewcommand\fs@ruled{\def\@fs@cfont{\bfseries}\let\@fs@capt\floatc@ruled
  \def\@fs@pre{\hrule height \hrulethickness depth0pt \kern2pt}%
  \def\@fs@post{\kern2pt\hrule height \hrulethickness depth0pt \relax}%
  \def\@fs@mid{\kern2pt\hrule height \hrulethickness depth0pt \kern2pt}%
  \let\@fs@iftopcapt\iftrue}
\newcommand\blfootnote[1]{%
  \begingroup
  \renewcommand\thefootnote{}\footnote{#1}%
  \addtocounter{footnote}{-1}%
  \endgroup
}
\newcommand{\Reals}{\mathbb{R}}
\newcommand\mailto[1]{\href{mailto:#1}{\tt #1}}
\DeclareMathOperator{\diag}{diag}
\DeclareMathOperator{\vol}{vol}
\begin{document}

\thispagestyle{empty}
\renewcommand{\thefootnote}{\fnsymbol{footnote}}

\vspace*{2cm}
\begin{center}

{\LARGE \bf Conformal Symplectic and Relativistic\\[.4em]Optimization}

\vspace{1cm}

{\bf Guilherme Fran\c ca,$\!$%
\footnote{\faEnvelopeO~\mailto{guifranca@gmail.com}}$^{,1,2}$%
\blfootnote{\faCalendar~December 2020}
Jeremias Sulam,$^{\!2}$~Daniel P. Robinson,$^{\!3}$ and Ren\' e Vidal$^{\,2}$
}

\vspace{.8cm}

{\it
${}^{1}$Division of Computer Science, 
University of California, Berkeley, CA, USA\\[.2em]
${}^{2}$Mathematical Institute for Data Science, 
Johns Hopkins University, MD, USA\\[.2em]
${}^{3}$Industrial and Systems Engineering, 
Lehigh University, PA, USA
}

\vspace{1.5cm}

{\bf Abstract}
\vspace{-1em}

\end{center}
\noindent
Arguably, the two most popular accelerated or momentum-based optimization
methods in machine learning are Nesterov's accelerated gradient and Polyaks's heavy ball,
both corresponding to different discretizations of a particular second order
differential equation with friction. Such connections with
continuous-time dynamical systems have been instrumental in demystifying
acceleration phenomena in optimization.
Here we  study structure-preserving discretizations for a certain class of
dissipative (conformal) Hamiltonian systems, allowing us to analyze the
symplectic structure of both Nesterov and heavy ball, besides providing
several new insights into these methods.
Moreover,  we propose a new algorithm based on a dissipative relativistic
system that normalizes the momentum and may result in more stable/faster
optimization. Importantly, such a method generalizes both Nesterov and heavy
ball, each being recovered as distinct limiting cases, and has potential
advantages at no additional cost.

\renewcommand*{\thefootnote}{\arabic{footnote}}
\setcounter{footnote}{0}
\setcounter{page}{0}

\newpage
\tableofcontents

\bigskip

\section{Introduction}

Gradient based optimization methods are ubiquitous
in machine learning since they
only require first order information on the objective function. This
makes them
computationally efficient. However, vanilla gradient descent can be slow.
Alternatively,
\emph{accelerated gradient methods}, whose  construction can be traced
back to
Polyak \cite{Polyak:1964} and Nesterov \cite{Nesterov:1983}, became popular
due to their ability to achieve best worst-case complexity bounds.
The heavy ball method, also known as
\emph{classical momentum} (CM) method, is given by
\begin{equation}
\label{momgd2}
v_{k+1} = \mu v_k - \epsilon \nabla f(x_k),  \qquad
x_{k+1} = x_k + v_{k+1},
\end{equation}
where $k=0,1,\dotsc$ is the iteration number, $\mu \in (0,1)$ is the momentum factor, $\epsilon > 0$ is the
 learning rate, and $f:\mathbb{R}^n \to \mathbb{R}$ is the
function being minimized. Similarly, \emph{Nesterov's accelerated gradient} (NAG)
can be found in the form
\begin{equation}
\label{nag2}
v_{k+1} = \mu v_k - \epsilon \nabla f(x_k + \mu v_k), \qquad
x_{k+1} = x_k + v_{k+1}.
\end{equation}
Both methods have a long history in optimization
and machine learning \cite{Hinton:2013}. They are also
the basis for the construction of other methods, such as adaptive ones that additionally
include some gradient normalization \cite{Tieleman:2012,Kingma:2015,Duchi:2017,Dozat:2016}.

In discrete-time optimization the ``acceleration phenomena'' are considered
counterintuitive.
By this we mean a mechanism by which an algorithm can be accelerated, i.e. have a faster convergence;
for instance, it is known that gradient descent converges at a rate of $O(1/k)$ for convex functions, while
NAG converges at a rate $O(1/k^2)$, which is optimal in the sense of worst-case complexity.
A complete understanding of why NAG is able to achieve such an improved rate is considered by
many experts an important open problem, and currently there is no guiding principle to construct accelerated algorithms.
A promising direction to understand this has been emerging
in connection with continuous-time dynamical systems
\cite{Candes:2016,Wibisono:2016,Krichene:2015,
Zhang:2018,Shi:2018,Yang:2018,Betancourt:2018,
Franca:2018,Franca:2018b,Franca:2019_split,Franca:2020} where many of these difficulties disappear or have an intuitive explanation.
Since one is free to discretize a continuous-time system in many different ways, it is
only natural to ask which discretization strategies would be most suitable for optimization?
Such a question is unlikely to have a simple answer, and may be problem dependent.
Unfortunately, typical discretizations are also known to introduce spurious artifacts
and do not reproduce the most important properties
of the continuous-time system \cite{McLachlan:2006}.
Nevertheless, a special class of discretizations in the physics literature known
as \emph{symplectic integrators} \cite{Forest:2006,Channell:1990,McLachlan:2006,Quispel:2006}
are to be preferable whenever considering the special class of \emph{conservative Hamiltonian systems}.

More relevant to optimization is a class of \emph{dissipative} systems
known as  \emph{conformal Hamiltonian systems} \cite{McLachlan:2001}.
Recently, results from symplectic integrators were
 extended to this case and such methods are called
\emph{conformal symplectic integrators}  \cite{Moore:2016,Franca:2020}.
Conformal symplectic methods tend to have long time stability because the numerical trajectories remain
in the same conformal symplectic manifold as the original  system \cite{Franca:2020}.
Importantly, these methods do not
change the phase portrait of the system, i.e. the stability of critical points is preserved.
Although symplectic techniques have had great success in several areas of physics
and Monte Carlo methods, only  recently they started to be considered in
optimization \cite{Betancourt:2018,Franca:2020} and are still mostly unexplored in this context.
Very recently a great progress has been made  \cite{Franca:2020} by showing that such an approach is able to preserve the continuous-time rates of convergence up to a controlled error \cite{Franca:2020}.

In this paper, we \emph{relate conformal symplectic integrators to optimization} and provide important insights
into CM \eqref{momgd2} and NAG \eqref{nag2}. 
We prove that CM is a first order accurate conformal symplectic  integrator.
On the other hand, we show that NAG is also first order accurate,  but  not conformal symplectic since it introduces some spurious dissipation---or excitation.
However, it does so in an interesting way that
depends on the Hessian $\nabla^2 f$;  the symplectic form contracts
in a Hessian dependent manner and so do phase space volumes.  This is an effect of higher order but can influence the
behaviour of the algorithm.
We also derive \emph{modified equations} and \emph{shadow Hamiltonians} for both CM and NAG.
Moreover, we indicate a tradeoff between
stability, symplecticness, and such an spurious contraction, indicating
advantages in structure-preserving discretizations for optimization.

Optimization can be challenging in a landscape with large gradients, e.g. for a function with fast growing tails. The only way to control divergences  in
methods such as \eqref{momgd2} and \eqref{nag2} is to make the step size very
small, but then the algorithm becomes slow. One approach to this issue is to introduce a suitable normalization of the gradient.
Here we propose an alternative approach motivated by \emph{special relativity} in physics.
The reason is that in special relativity there is a limiting speed, i.e.
the speed of light.   Thus, by discretizing  a \emph{dissipative relativistic system},
we obtain an algorithm that incorporates this effect and may result in more stable optimization in
settings with large gradients. Specifically, we introduce Algorithm~\ref{rgd1}.
Besides the momentum factor $\mu$ and the learning rate $\epsilon$---also present
in \eqref{momgd2} and \eqref{nag2}---the above \emph{relativistic gradient descent} (RGD) method has the additional parameters
$\delta \ge 0$ and $0 \le \alpha \le 1$ which brings some interesting properties:

\begin{algorithm}[t]
\caption{RGD method for minimizing a smooth function $f(x)$. \label{rgd1}
In practice, we recommend setting $\alpha=1$ which results in a conformal symplectic
method.
}
\begin{algorithmic}
\Require Initial state $(x_0, v_0)$ and parameters
$\epsilon > 0$, $\delta > 0$, $\mu \in (0,1)$, $\alpha \in [0,1]$
\For{$k=0,1,\dotsc$}
\State $x_{k+1/2} \gets x_k +  \sqrt{\mu} \left( \mu \delta \| v_k\|^2 +1 \right)^{-1/2} v_k $
\State $v_{k+1/2} \gets \sqrt{\mu} v_k - \epsilon \nabla f(x_{k+1/2})$
\State $x_{k+1} \gets \alpha x_{k+1/2} + (1-\alpha) x_k + \left( \delta \| v_{k+1/2}\|^2 + 1 \right)^{-1/2}  v_{k+1/2}$
\State $v_{k+1} \gets \sqrt{\mu} \, v_{k+1/2}$
\EndFor
\end{algorithmic}
\end{algorithm}

\begin{itemize}
\item When $\delta=0$ and $\alpha=0$, RGD recovers NAG \eqref{nag2}.
When $\delta=0$ and $\alpha=1$, RGD becomes a second  order
accurate version of CM \eqref{momgd2}, which has a close behavior but an improved
stability. Thus, RGD can interpolate between these two methods.
Moreover, RGD has the same computational cost as CM or NAG. These facts imply that RGD
is at least as efficient as CM and NAG if appropriately tuned.
\item Let $y_k \equiv \alpha x_{k+1/2} + (1-\alpha) x_k$. The
last update in Algorithm~\ref{rgd1} implies $\| x_{k+1} - y_k \| \le 1/\delta$. Thus, with $\delta > 0$, RGD is globally bounded regardless how large $\|\nabla f\|$ might be; this is in contrast with CM and NAG where $\delta = 0$, i.e. $\| x_{k+1} - y_k\| \le \infty$.
The square root factor in Algorithm~\ref{rgd1} has a ``relativistic origin'' and its
strength is controlled by  $\delta$.
For this reason, RGD may be more stable than CM and NAG, preventing
divergences in settings of large gradients;
see Fig.~\ref{circles} in Section~\ref{stability_appendix} and the plots in Appendix~\ref{sec:extra_numerical}.
\item As we will show, $\alpha=1$ implies that RGD is \emph{conformal symplectic}, whereas
$\alpha =0$ implies a spurious Hessian driven damping similarly found in  NAG.
Thus, RGD has the flexibility of being ``dissipative-preserving'' or introducing
some  ``spurious contraction.''  However, based on theoretical arguments and empirical
evidence, we advocate for the choice $\alpha=1$.\footnote{The only reason for introducing the extra parameter $0 \le \alpha  \le 1$ into Algorithm~\ref{rgd1} is to actually let the experiments decide  whether $\alpha=1$ (symplectic)  or $\alpha < 1$ (non-symplectic) is desirable or not.}
\end{itemize}

Let us mention a few related works.
Applications of symplectic integrators in optimization was first considered in
\cite{Betancourt:2018}---although this is different than the conformal symplectic
case explored here.
Recently, the benefits of symplectic methods in optimization started to be indicated
\cite{Muehlebach:2020}. Actually, even more recently, a generalization of symplectic integrators
to a general class of dissipative Hamiltonian systems was proposed \cite{Franca:2020}, with theoretical results ensuring that
such discretizations are ``rate-matching'' up to a negligible
error; this construction is general and contains the conformal case considered here as a particular case.   Relativistic systems are obviously an elementary topic in physics but---with some modifications---the relativistic kinetic energy
was considered in Monte Carlo methods \cite{Lu:2017,Livingstone:2017} and also briefly
in \cite{Maddison:2018}.
Finally, we stress that Algorithm~\ref{rgd1} is a completely new method in the
literature, generalizing
perhaps the two most popular existing accelerated methods, namely  CM and NAG, and also
has the ability to be conformal symplectic besides being adaptive
in the momentum which may help controlling divergences.
We also provide several new insights into CM and NAG in Section~\ref{proof_nag_contract_omega} and Section~\ref{stability_appendix} which may be of independent interest.

\section{Conformal Hamiltonian systems}
\label{hamiltonian}

We start by introducing the basics of conformal Hamiltonian
systems and focus on their intrinsic symplectic geometry;
we refer to \cite{McLachlan:2001} for  details.
The state of the system is described by a point
on phase space, $(x, p) \in \mathbb{R}^{2n}$,
where $x=x(t)$ is
the generalized coordinates and $p=p(t)$ its
conjugate momentum, with
$t\in\mathbb{R}$ being the time.
The system is completely specified by a Hamiltonian
function $H: \Reals^{2n}\to \Reals$ and required to obey
a modified form of Hamilton's equations:
\begin{equation}
\label{confhameq}
\dot{x} = \nabla_p H(x,p), \qquad
\dot{p} = -\nabla_x H(x,p) - \gamma p.
\end{equation}
Here
$\dot{x} \equiv \tfrac{d x}{dt}$,
$\dot{p} \equiv \tfrac{d p}{dt}$,
and $\gamma > 0$ is a damping constant.
A classical example is given by
\begin{equation}
\label{hamiltonian1}
H(x,p) = \dfrac{\| p \|^2}{2m} + f(x)
\end{equation}
where $m>0$ is the mass of a particle subject
to a potential $f$.
The Hamiltonian is the energy of the system and
upon taking its time derivative
one finds $\dot{H} = - \gamma \| p \|^2 \le 0$.
Thus $H$ is a Lyapunov function and all orbits tend to critical points,
which in this case must satisfy $\nabla f(x)=0$ and $p=0$.
This implies that the system is stable on isolated minimizers of $f$.\footnote{This can be generalized for any  Hamiltonian $H$ that is strongly convex on $p$
with the minimum at $p=0$.}


Define
\begin{equation} \label{Jdef}
z \equiv \begin{bmatrix} x \\ p\end{bmatrix}, \qquad
\Omega \equiv
\begin{bmatrix} 0 & I \\ -I & 0
\end{bmatrix}, \qquad
D \equiv \begin{bmatrix} 0 & 0 \\ 0 & I \end{bmatrix},
\end{equation}
where $I$ is the $n\times n$ identity matrix,
to write the equations of motion \eqref{confhameq} concisely as\footnote{$C(z)$ and $D(z)$ will be used later on and stand
for ``conservative'' and ``dissipative'' parts, respectively.}
\begin{equation}
  \label{confhamuni}
\dot{z} = \underbrace{\Omega \nabla H(z)}_{C(z)} -\underbrace{\gamma Dz}_{D(z)} .
\end{equation}
Note that $\Omega \Omega^T = \Omega^T \Omega = I$ and $\Omega^2 = - I$, so that
$\Omega$ is real, orthogonal and antisymmetric.
Let $\xi, \eta \in \mathbb{R}^{2n}$ and define the \emph{symplectic 2-form}
$\omega(\xi,\eta)\equiv \xi^T \Omega \,\eta$.
It is convenient to use the wedge product representation of this 2-form, namely\footnote{It is not strictly necessary to be familiar with differential forms and exterior calculus to understand this paper. For the current purposes, it is enough to recall that the wedge product is a bilinear and antisymmetric operation, i.e. $dx \wedge (a dy + b dz) = a dx \wedge dy + b dx \wedge dz$
and $dx\wedge dy = -dy \wedge dx$ for scalars $a$ and $b$ and 1-forms $dx$, $dy$, $dz$ (think about this as vector differentials); we refer to \cite{Flanders} and \cite{Hairer} for more details if necessary.
}
\begin{equation}
\omega(\xi, \eta) =
(dx \wedge dp)(\xi, \eta).
\end{equation}
We  denote $\omega_t \equiv dx(t) \wedge dp(t)$.
The equations of motion define a
flow $\Phi_t: \Reals^{2n} \to \Reals^{2n}$, i.e.
$\Phi_t\big(z_0) \equiv z(t)$ where $z(0) \equiv z_0$.
Let $J_{t}(z)$ denote the Jacobian  of $\Phi_t(z)$.
From \eqref{confhamuni} it is not hard to show that
(see e.g. \cite{McLachlan:2001})%
\begin{equation}
  \label{sym_conf}
J_t^T \Omega J_t = e^{-\gamma t} \Omega
\quad \implies \quad
\omega_t = e^{-\gamma t} \omega_0.
\end{equation}
Therefore, a conformal Hamiltonian flow $\Phi_t$ \emph{contracts the
symplectic form exponentially} with respect to the damping coefficient $\gamma$.
It follows from \eqref{sym_conf} that
volumes on phase space shrink as
$\vol ( \Phi_t(\mathcal{R}) )  = \int_{\mathcal{R}} | \det J_t(z)| dz
  = e^{-n \gamma t} \vol (\mathcal{R})
$
where $\mathcal{R} \subset \mathbb{R}^{2n}$.
This contraction
is stronger as dimension increases.
The conservative case is recovered with $\gamma = 0$ above; in this case, the symplectic structure
is preserved and volumes remain invariant (Liouville's theorem).
A known and interesting property of conformal Hamiltonian systems is that their
Lyapunov exponents sum up in pairs to $\gamma$ \cite{Dessler:1988}.
This imposes constraints on the admissible dynamics and controls the phase
portrait near critical points.
For other properties of attractor sets we refer to \cite{Maro:2017}.
Finally, conformal symplectic transformations can be composed
and form the so-called conformal group.

\section{Conformal symplectic optimization}
\label{conformal_symplectic}


Consider \eqref{confhamuni}
where we associate flows $\Phi^{C}_t$ and $\Phi^{D}_t$ to the respective
vector fields $C(z)$ and $D(z)$.
Conformal symplectic integrators can be constructed as \emph{splitting methods} that
approximate the true flow $\Phi_t$ by composing the individual
flows
$\Phi_t^C$ and $\Phi_t^D$.
Our procedure to obtain a numerical map $\Psi_h$, with
step size $h > 0$,
is to first obtain a numerical approximation to the
conservative part of the system,
$\dot{z} = \Omega \nabla H(z)$. This yields a numerical map $\Psi^C_h$
that approximates $\Phi^C_h$ for small intervals of time $[t,t+h]$.
One can choose any standard \emph{symplectic integrator} for this task.
Let us pick the simplest, i.e.  the
symplectic Euler method \cite[pp. 189]{Hairer}. We thus have
$\Psi_h^{C}: (x,p) \mapsto (X,P)$ where
\begin{equation}
\label{phic}
X = x + h \nabla_p H(x, P), \qquad
P = p - h \nabla_x H(x, P).
\end{equation}
Now the dissipative part of the system, $\dot{z} = -\gamma D z$,
can be integrated exactly. Indeed,
$\dot{x} = 0$ and $\dot{p} = -\gamma p$, thus
$\Psi_h^D : (x,p) = (x, e^{-\gamma h} p)$.
With
$\Psi_h \equiv \Psi_h^C \circ \Psi_h^D$ we obtain
$\Psi_h : (x, p) \mapsto (X, P)$ as
\begin{equation}
  \label{gen_euler}
  P = e^{-\gamma h} p - h \nabla_x H(x,P), \qquad
  X = x + h \nabla_p H(x, P) .
\end{equation}
This is nothing but a \emph{dissipative version} of the symplectic Euler method.
Similarly, if we choose the leapfrog method \cite[pp. 190]{Hairer} for  $\Psi_h^C$ and consider $\Psi_h \equiv \Psi_{h/2}^D \circ \Psi_h ^C \circ \Psi_{h/2}^D$ we obtain
\begin{subequations}\label{gen_leap}
\begin{align}
\tilde{X} &= x + \dfrac{h}{2} \nabla_p H\big(\tilde{X}, e^{-\gamma h/2} p\big), \label{gen_leap1} \\
\tilde{P} &= e^{-\gamma h/2} p - \dfrac{h}{2}\big( \nabla_x H\big(\tilde{X}, e^{-\gamma h/2}p\big) + \nabla_x H\big(\tilde{X}, \tilde{P}\big)\big), \\
X &= \tilde{X} + \dfrac{h}{2} \nabla_p H(\tilde{X}, \tilde{P}), \\
P &= e^{-\gamma h / 2}\tilde{P}. \label{gen_leap4}
\end{align}
\end{subequations}
This is a \emph{dissipative} version of the leapfrog, which is recovered when $\gamma = 0$.
Note that in general \eqref{gen_euler} is implicit in $P$, and \eqref{gen_leap} is implicit in $\tilde{X}$ and $P$. However,
both will become explicit for separable Hamiltonians, $H = T(p) + f(x)$, and in this case they are extremely efficient.
Note also that \eqref{gen_euler} and \eqref{gen_leap} are completely general, i.e. by choosing
a suitable Hamiltonian $H$ one can obtain several possible optimization algorithms from these integrators.
Next, we show important
properties of these integrators.
(Below we denote $t_k = k h$ for $k=0,1,\dotsc$, $z_k \equiv z(t_k)$, etc.)



\begin{definition}[Order of accuracy]\label{order}
A numerical map
$\Psi_h$
is said to be of order $r\ge 1$ if
$\| \Psi_h(z) - \Phi_h(z)\| = O(h^{r+1})$
for any $z \in \mathbb{R}^{2n}$. (Recall that  $h > 0$ is the step size and $\Phi_h$ is the true flow.)
\end{definition}

\begin{definition}[Conformal symplectic integrator]\label{confmap}
A numerical map
$\Psi_h$
is said to be conformal symplectic if
$z_{k+1} = \Psi_h(z_k)$ is
conformal symplectic, i.e.
$\omega_{k+1} = e^{-\gamma h} \omega_k$,
whenever $\hat{\Phi}_h$ is applied to a smooth
Hamiltonian. Iterating such a map yields
$\omega_k =
e^{-\gamma t_k} \omega_0$ so that \eqref{sym_conf} is preserved.
\end{definition}


\begin{theorem}  \label{confsymp}
Both methods \eqref{gen_euler} and \eqref{gen_leap} are conformal symplectic.
\end{theorem}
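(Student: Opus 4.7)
My plan is to exploit the splitting construction that produced \eqref{gen_euler} and \eqref{gen_leap}: each integrator is a composition of the exact dissipative flow $\Psi_h^D$ of $\dot z=-\gamma Dz$ with a standard symplectic integrator $\Psi_h^C$ for the conservative Hamiltonian flow $\dot z=\Omega\nabla H(z)$. The claim then reduces to tracking how the symplectic form $\Omega$ transforms under each factor and composing via the chain rule.

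Two elementary facts are needed. First, $\Psi_h^D:(x,p)\mapsto(x,e^{-\gamma h}p)$ is itself conformal symplectic with rate $e^{-\gamma h}$: its Jacobian is the constant block matrix $J^D=\diag(I,e^{-\gamma h}I)$, and a direct block computation with $\Omega$ from \eqref{Jdef} gives $(J^D)^T\Omega J^D=e^{-\gamma h}\Omega$. Second, the symplectic Euler map \eqref{phic} and the leapfrog map used in \eqref{gen_leap} are classical symplectic integrators for the Hamiltonian flow $\dot z=\Omega\nabla H(z)$, so their Jacobians satisfy $(J^C)^T\Omega J^C=\Omega$ pointwise for every smooth $H$. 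I would cite \cite{Hairer} for this rather than re-derive it.

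With these ingredients the conclusion is immediate. For \eqref{gen_euler}, $\Psi_h=\Psi_h^C\circ\Psi_h^D$ has Jacobian $J=J^CJ^D$ (with $J^C$ evaluated at $\Psi_h^D(z)$), so
\begin{equation*}
J^T\Omega J=(J^D)^T(J^C)^T\Omega J^C J^D=(J^D)^T\Omega J^D=e^{-\gamma h}\Omega.
\end{equation*}
For \eqref{gen_leap}, $\Psi_h=\Psi_{h/2}^D\circ\Psi_h^C\circ\Psi_{h/2}^D$ has Jacobian $J=J^D_{h/2}J^CJ^D_{h/2}$; sandwiching $(J^C)^T\Omega J^C=\Omega$ between two applications of $(J^D_{h/2})^T\Omega J^D_{h/2}=e^{-\gamma h/2}\Omega$ then yields $J^T\Omega J=e^{-\gamma h}\Omega$. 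Both composed maps therefore satisfy the defining condition in Definition~\ref{confmap}.

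The only real subtlety is the classical symplecticity of $J^C$: one must carefully match the specific mixed-point evaluation in \eqref{phic} and the implicit ordering in \eqref{gen_leap1}--\eqref{gen_leap4} to recognised symplectic schemes, since not every seemingly natural discretisation of $\dot z=\Omega\nabla H(z)$ is symplectic. Once the identification with the textbook schemes is in place, the rest of the proof is just the two-line composition above.
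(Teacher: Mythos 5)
Your proposal is correct and follows essentially the same route as the paper: both identify $\Psi_h^D$ as conformal symplectic with Jacobian satisfying $(J^D)^T\Omega J^D=e^{-\gamma h}\Omega$, invoke the classical symplecticity of $\Psi_h^C$, and conclude by composing Jacobians via the chain rule. Your explicit treatment of the leapfrog sandwich $\Psi_{h/2}^D\circ\Psi_h^C\circ\Psi_{h/2}^D$ merely spells out what the paper summarizes as ``any composition whose overall time step adds up to $h$.''
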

\begin{proof}
Note that in both cases $\Psi_h^C$ is a symplectic
integrator, i.e. its Jacobian $J^C_h$ obeys $(J^C_h)^T \Omega J_h^C = \Omega$---see \eqref{sym_conf} with $\gamma = 0$.
Now the map $\Psi_h^D$ defined above is conformal symplectic, i.e. one can verify
that its Jacobian $J^D_h$ obeys $(J^D_h)^T \Omega J^D_h = e^{-\gamma h} \Omega$.
Hence, any composition of these maps will be conformal symplectic. For instance,
\begin{equation}
  (J^C_h J^D_h)^T \Omega (J^C_h J^D_h) = (J^{D}_h)^T (J^C_h)^T \Omega J^C_h J^D_h = (J^D_h)^T \Omega J^D_h = e^{-\gamma h} \Omega.
\end{equation}
The same would be true for any type of composition whose overall time step add up to $h$.
\end{proof}

\begin{theorem}\label{accuracy}
The numerical scheme \eqref{gen_euler} is of order $r=1$, while \eqref{gen_leap} is of order $r=2$.
\end{theorem}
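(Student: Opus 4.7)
The plan is to establish the two orders by Taylor expanding each composed map and comparing with the exact flow $\Phi_h$ of \eqref{confhamuni}. Start by splitting $\dot z = C(z)+D(z)$ and noting that the dissipative subsystem $\dot z = -\gamma Dz$ is linear, with exact solution $\Psi_h^D(x,p) = (x,e^{-\gamma h}p) = \Phi_h^D(x,p)$. Hence $\Psi_h^D$ contributes no truncation error, and every error term must originate from either the conservative integrator $\Psi_h^C$ or from the splitting itself.

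For scheme \eqref{gen_euler}, write $\Psi_h = \Psi_h^C\circ\Psi_h^D$ and use the Lie--Trotter (BCH) identity for flow exponentials,
\[
e^{hC}\,e^{hD} \;=\; e^{h(C+D) + \tfrac{h^2}{2}[C,D] + O(h^3)},
\]
which shows that even the idealised composition $\Phi_h^C\circ\Phi_h^D$ differs from $\Phi_h$ by a term of size $O(h^2)$. Then expand the symplectic Euler map \eqref{phic} componentwise: writing $(X,P) = (x + h\nabla_p H(x,P),\, p - h\nabla_x H(x,P))$, solving implicitly in $P$, and matching against the Taylor expansion of $(x(t+h),p(t+h))$ derived from $\dot x = \nabla_p H$ and $\dot p = -\nabla_x H$ gives $\Psi_h^C(z) = \Phi_h^C(z) + O(h^2)$. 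Adding these two $O(h^2)$ contributions yields the total local error of \eqref{gen_euler}, so $r=1$ by Definition~\ref{order}.

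For scheme \eqref{gen_leap} the outer composition $\Psi_{h/2}^D\circ\Psi_h^C\circ\Psi_{h/2}^D$ is a \emph{symmetric} Strang splitting, i.e.\ it equals its own inverse under $h\mapsto -h$. The symmetric BCH formula then forces all odd powers of $h$ in the effective vector field to cancel, producing
\[
e^{hD/2}\,e^{hC}\,e^{hD/2} \;=\; e^{h(C+D) + O(h^3)},
\]
so that exact Strang splitting approximates $\Phi_h$ to local error $O(h^3)$. Moreover, the inner leapfrog step is itself symmetric and second-order accurate, giving $\Psi_h^C(z) = \Phi_h^C(z) + O(h^3)$ by a standard expansion of \eqref{gen_leap1}--\eqref{gen_leap4} around $h=0$. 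Combining the two $O(h^3)$ pieces yields local error $O(h^3)$, establishing $r=2$.

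The main obstacle will be the bookkeeping in the second case: one must confirm that substituting an only-approximate leapfrog $\Psi_h^C$ inside the symmetric sandwich with the exact $\Psi_{h/2}^D$ does not silently inject an $O(h^2)$ term that would destroy the $O(h^3)$ splitting estimate. This is exactly where self-adjointness of the leapfrog map is used. A more concrete alternative, bypassing BCH machinery entirely, is to insert the ansatz $\tilde X = x + h X_1 + h^2 X_2 + \cdots$ and $\tilde P = p + h P_1 + h^2 P_2 + \cdots$ into \eqref{gen_leap1}--\eqref{gen_leap4}, solve for the coefficients order by order, and verify they match the Taylor expansion of the true trajectory through $O(h^2)$, with the first mismatch appearing at $O(h^3)$.
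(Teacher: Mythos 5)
Your argument is correct, but it takes a genuinely different route from the paper's. You lean on the general theory of splitting methods: the Lie--Trotter/BCH estimate showing that even the exact composition $\Phi_h^C\circ\Phi_h^D$ carries an $O(h^2)$ commutator defect, the symmetry of the Strang sandwich to cancel that term, the standard first-order accuracy of symplectic Euler and second-order accuracy of the leapfrog, and the observation that $\Psi_h^D=\Phi_h^D$ exactly. The paper explicitly declines this machinery---Appendix~\ref{proof_accuracy} notes that these splitting-order facts are ``known'' and cites \cite{Hairer}---and instead gives a fully explicit, self-contained verification: Taylor-expand $x(t_k+h)$ and $p(t_k+h)$ through $O(h^2)$ from \eqref{confhameq} (equations \eqref{taylorq}--\eqref{taylorp}), expand both numerical maps to the same order (equations \eqref{Xtilde_exp}--\eqref{X_exp}), and match term by term. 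Your approach is shorter and more conceptual; the paper's buys explicit $O(h^2)$ expansions of exactly the type reused later to derive the modified (shadow) equations in Theorems~\ref{nag_shadow} and~\ref{cm_shadow}. One small quibble with your presentation: the reason the sandwich does not inject an $O(h^2)$ term is not the self-adjointness of the leapfrog per se, but simply that the outer maps are the \emph{exact} (linear, hence Lipschitz) dissipative flow and therefore propagate the leapfrog's $O(h^3)$ defect without changing its order; self-adjointness is what makes the leapfrog itself second order and the exact Strang composition accurate to $O(h^3)$. Your proposed fallback---the order-by-order ansatz for $\tilde X,\tilde P$---is essentially the computation the paper carries out.
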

\begin{proof}
The proof simply involves manipulating Taylor expansions for the numerical method and for the continuous-time
system over a time interval of $h$; this is presented in Appendix~\ref{proof_accuracy}.
\end{proof}

We mention that one can construct higher order integrators by following the above approach, however these
would be more expensive, involving more gradient computations per iteration.
In practice, methods of order $r=2$ tend to have the best cost benefit.

\section{Symplectic structure of heavy ball and Nesterov}
\label{momentum-nesterov}

Consider the classical Hamiltonian \eqref{hamiltonian1} and replace into
\eqref{gen_euler} to obtain
\begin{equation}
\label{simpleap}
p_{k+1}  = e^{-\gamma h} p_k - h \nabla f(x_k),
\qquad
x_{k+1} = x_k + \tfrac{h}{m} p_{k+1},
\end{equation}
where we now make the iteration number $k=0,1,\dotsc$ explicit for convenience of the reader in relating
to optimization methods. Introducing a change of variables,
\begin{equation}
\label{momgd_params}
v_k \equiv \dfrac{h}{m} p_k, \qquad \epsilon \equiv \dfrac{h^2}{m}, \qquad \mu \equiv e^{-\gamma h},
\end{equation}
we  see that \eqref{simpleap} is precisely the well-known CM method \eqref{momgd2}.
Therefore, CM is nothing but a dissipative version of the symplectic Euler method.
Thanks to Theorems~\ref{confsymp}~and~\ref{accuracy} we have:

\begin{corollary}[CM is ``symplectic'']\label{momgd_symp}
The classical momentum or heavy ball method \eqref{momgd2} is a conformal symplectic integrator for the Hamiltonian
system \eqref{hamiltonian1}. Moreover, it is an integrator of order $r=1$.
\end{corollary}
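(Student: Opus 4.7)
The plan is to recognize this corollary as essentially an unpacking exercise: the CM update is literally the scheme \eqref{gen_euler} applied to the classical Hamiltonian \eqref{hamiltonian1}, up to the invertible linear change of coordinates \eqref{momgd_params}. So the work reduces to (i) carrying out the substitution, (ii) invoking the two previously proved theorems, and (iii) checking that the change of variables preserves both conclusions.

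First I would specialize \eqref{gen_euler} to $H(x,p) = \tfrac{1}{2m}\|p\|^2 + f(x)$. Then $\nabla_x H = \nabla f(x)$ and $\nabla_p H = p/m$, so the nominally implicit first equation of \eqref{gen_euler} becomes
\begin{equation*}
P = e^{-\gamma h} p - h\nabla f(x), \qquad X = x + \tfrac{h}{m} P,
\end{equation*}
which is exactly \eqref{simpleap}. Since separability has eliminated the implicitness, no extra argument is needed to identify the two schemes.

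Next, I invoke Theorem~\ref{confsymp} to conclude that the map $(x,p)\mapsto(X,P)$ defined above is conformal symplectic, and Theorem~\ref{accuracy} to conclude it is of order $r=1$. It remains to translate these properties through \eqref{momgd_params}. The change of variables $(x,p)\mapsto(x,v)$ with $v = (h/m)p$ is the linear map with block-diagonal Jacobian $L=\mathrm{diag}(I,(h/m)I)$, so
\begin{equation*}
L^T \Omega L = \tfrac{h}{m}\,\Omega,
\end{equation*}
i.e.\ $L$ merely rescales $\Omega$ by a nonzero constant. Consequently, $J^T\Omega J = e^{-\gamma h}\Omega$ in the $(x,p)$ coordinates is equivalent to the analogous identity in the $(x,v)$ coordinates, so the conformal symplectic property is preserved verbatim (with the same contraction factor $\mu = e^{-\gamma h}$). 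Likewise, an order-$r$ scheme remains order-$r$ under any smooth invertible change of variables, because $\Psi_h - \Phi_h = O(h^{r+1})$ is preserved under composition with fixed smooth maps. Substituting $\epsilon = h^2/m$ and $\mu = e^{-\gamma h}$ then turns \eqref{simpleap} into \eqref{momgd2}, completing the identification.

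The only place that warrants a moment of care—and essentially the only ``obstacle''—is the bookkeeping around the change of variables: one must be sure that rescaling momentum does not break conformal symplecticness or change the order of accuracy. Both issues dissolve once one notes that $L$ is a constant linear isomorphism, so I would present this as a one-line remark rather than a calculation. Everything else is direct substitution into the previous two theorems.
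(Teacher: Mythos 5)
Your proof is correct and follows essentially the same route as the paper: substitute the classical Hamiltonian \eqref{hamiltonian1} into the dissipative symplectic Euler scheme \eqref{gen_euler} to obtain \eqref{simpleap}, identify it with CM \eqref{momgd2} via \eqref{momgd_params}, and invoke Theorems~\ref{confsymp} and~\ref{accuracy}. Your extra remark that the constant linear rescaling $v=(h/m)p$ only multiplies $\Omega$ by $h/m$ and hence preserves both the conformal contraction factor and the order of accuracy is a nice bit of added care that the paper leaves implicit (it simply works in the $(x,p)$ phase space throughout).
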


Consider again the Hamiltonian \eqref{hamiltonian1} but replaced into \eqref{gen_leap}.
Let us also replace the last update \eqref{gen_leap4}, i.e. from a previous iteration, into the first update \eqref{gen_leap1}.\footnote{Note that it is valid to replace successive updates without changing the algorithm.} We thus obtain
\begin{equation} \label{leapclass}
x_{k+1/2} = x_{k} + \dfrac{h}{2m} e^{-\gamma h} p_k, \quad
p_{k+1} = e^{-\gamma h} p_k - h \nabla f(x_{k+1/2}), \quad
x_{k+1} = x_{k+1/2} + \dfrac{h}{2m} p_{k+1}.
\end{equation}
Define
\begin{equation} \label{nag_params}
v_{k} \equiv \dfrac{h}{2m} p_k, \qquad \epsilon \equiv \dfrac{h^2}{2m}, \qquad \mu \equiv e^{-\gamma h}.
\end{equation}
Then  \eqref{leapclass} can be written as
\begin{equation} \label{almost_nag}
x_{k+1/2} = x_{k} + \mu v_k, \qquad
v_{k+1} = \mu v_k - \epsilon \nabla f(x_{k+1/2}), \qquad
x_{k+1} = x_{k+1/2} + v_{k+1}. 
\end{equation}
The reader can immediately recognize the close similarity with NAG  \eqref{nag2};
this would be exactly NAG if we replace $x_{k+1/2} \to x_{k}$ in the third update above.
As we will show next, this small difference has actually profound consequences.
Intuitively, by ``rolling this last update backwards'' one introduces a spurious friction
into the method, as we will show
through a symplectic perspective (Theorem~\ref{nag_contract_omega} below).
The method \eqref{leapclass}  is actually a second order accurate
version of \eqref{simpleap}.
In order to analyze the symplectic structure one must work on the phase space $(x,p)$.  The true phase space
equivalent to NAG is given by
\begin{subequations} \label{nag_mom}
\begin{align}
  x_{k+1/2} &= x_{k} + \dfrac{h}{m} e^{-\gamma h} p_k, \label{nag_mom1}\\ 
  p_{k+1} &= e^{-\gamma h} p_k - h \nabla f(x_{k+1/2}), \\ 
  x_{k+1} &= x_{k} + \dfrac{h}{m} p_{k+1},
\end{align}
\end{subequations}
which is completely equivalent to \eqref{nag2} under  the correspondence \eqref{momgd_params}.

\begin{theorem}[NAG is not ``symplectic''] \label{nag_contract_omega}
Nesterov's accelerated gradient \eqref{nag2}, or equivalently \eqref{nag_mom}, is an integrator of order $r=1$ to the Hamiltonian system \eqref{hamiltonian1}. This method is not conformal symplectic but  rather contracts the symplectic form as
\begin{equation} \label{nest_omega}
\omega_{k+1} =
e^{- \gamma h} \left[ I - \dfrac{h^2}{m}  \nabla^2 f(x_k) \right]
\omega_k  + O(h^3).
\end{equation}
\end{theorem}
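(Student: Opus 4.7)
I would first eliminate the intermediate point $x_{k+1/2}$ in \eqref{nag_mom} and write NAG as a single map $\Psi_h:(x,p)\mapsto(X,P)$ given by
\begin{equation*}
P = e^{-\gamma h}\,p - h\,\nabla f\!\left(x + \tfrac{h}{m}e^{-\gamma h}p\right),\qquad X = x + \tfrac{h}{m}\,P.
\end{equation*}
The order-of-accuracy claim is then a routine Taylor expansion, entirely analogous to Theorem~\ref{accuracy}: I Taylor-expand both $\Psi_h(x,p)$ and the true flow $\Phi_h$ of \eqref{confhameq} (using $\dot x=p/m$ and $\dot p=-\nabla f(x)-\gamma p$, and the corresponding second-order derivatives) up to $O(h^2)$, verify agreement at $O(h)$, and observe non-matching $O(h^2)$ terms (notably a $\tfrac{h^2}{2m}\nabla f(x_k)$ mismatch in $X$ and a $\tfrac{h^2}{2m}\nabla^2 f(x_k)p_k$ mismatch in $P$), giving $\|\Psi_h(z)-\Phi_h(z)\|=O(h^2)$ and hence $r=1$. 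This I would either carry out inline briefly or defer to an appendix.

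For the symplectic behaviour, the strategy is to compute the Jacobian $J$ of $\Psi_h$ in block form and then evaluate $J^{T}\Omega J$ directly. Writing $y\equiv x+\tfrac{h}{m}e^{-\gamma h}p$ and $H_y\equiv \nabla^2 f(y)$, differentiating gives
\begin{equation*}
J=\begin{pmatrix} I-\tfrac{h^2}{m}H_y & \tfrac{h}{m}e^{-\gamma h}\bigl(I-\tfrac{h^2}{m}H_y\bigr)\\[2pt] -h H_y & e^{-\gamma h}\bigl(I-\tfrac{h^2}{m}H_y\bigr)\end{pmatrix}.
\end{equation*}
All four blocks are symmetric polynomials in $H_y$ and the identity, hence they pairwise commute; this is the observation that makes the calculation tractable.

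Expanding $J^{T}\Omega J$ in block form, the two diagonal blocks vanish by commutativity of symmetric matrices in $H_y$, while the off-diagonal blocks require the key cancellation
\begin{equation*}
\bigl(I-\tfrac{h^2}{m}H_y\bigr)\,e^{-\gamma h}\bigl(I-\tfrac{h^2}{m}H_y\bigr) - (-hH_y)\,\tfrac{h}{m}e^{-\gamma h}\bigl(I-\tfrac{h^2}{m}H_y\bigr) = e^{-\gamma h}\bigl(I-\tfrac{h^2}{m}H_y\bigr),
\end{equation*}
obtained by factoring out $e^{-\gamma h}(I-\tfrac{h^2}{m}H_y)$ from both terms. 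This yields the clean identity $J^{T}\Omega J = e^{-\gamma h}\bigl(I-\tfrac{h^2}{m}H_y\bigr)\Omega$, where the leading factor acts as a block-diagonal multiplier. Pulling $\omega_{k+1}$ back under $\Psi_h$ and inserting $H_y=\nabla^2 f(x_k)+O(h)$ (from $y=x_k+O(h)$) converts the $O(h)$ correction into an $O(h^3)$ term after multiplication by $h^2/m$, producing exactly \eqref{nest_omega}.

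The main obstacle---really the only delicate step---is the cancellation in the off-diagonal block: one must spot that the cross-term $-hH_y \cdot \tfrac{h}{m}e^{-\gamma h}(I-\tfrac{h^2}{m}H_y)$ conspires with $(I-\tfrac{h^2}{m}H_y)^{2}e^{-\gamma h}$ to collapse one power of $(I-\tfrac{h^2}{m}H_y)$. Everything else is bookkeeping, made possible by the fact that all Jacobian blocks are symmetric functions of $H_y$ and hence commute. The same structure immediately shows that NAG \emph{fails} to be conformal symplectic (the extra $\tfrac{h^2}{m}H_y$ factor is absent for the schemes of Theorem~\ref{confsymp}), quantifying the spurious Hessian-dependent contraction advertised in the statement.
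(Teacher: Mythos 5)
Your proposal is correct and follows essentially the same route as the paper: a Taylor comparison against the flow of \eqref{class_harm} for the order claim, and a computation of how the one-step map deforms the symplectic form for the rest. Your block-Jacobian identity $J^{T}\Omega J = e^{-\gamma h}\bigl(I-\tfrac{h^2}{m}H_y\bigr)\Omega$ is just the matrix-language version of the paper's wedge-product calculation \eqref{nag_not_symp} (with the same final Taylor replacement $x_{k+1/2}\to x_k$ producing the $O(h^3)$ term), and your commutativity/cancellation observations check out.
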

\begin{proof}
We work on  phase space variables $(x,p)$ thus NAG should be considered in the form \eqref{nag_mom}.
First we derive the order of accuracy of this method with respect to its underlying 
Hamiltonian system:
\begin{equation} \label{class_harm}
\dot{x} = \dfrac{p}{m}, \qquad \dot{p} = - \nabla f(x) - \gamma p.
\end{equation}
Denote $x = x(t_k)$ and $p = p(t_k)$ and expand the  exponential in \eqref{nag_mom1} to obtain
$x_{k+1/2} = x + \tfrac{h}{m} p - \tfrac{h^2}{m} \gamma p + O(h^3)$.
Using this and  Taylor expansions in the last two updates of \eqref{nag_mom} yield
\begin{subequations} \label{nag_exp}
\begin{align}
p_{k+1} &= p - h \gamma p -h \nabla f(x) + \dfrac{h^2}{2} \gamma^2 p - \dfrac{h^2}{m}\nabla^2 f(x) p_k + O(h^3), \\
x_{k+1} &= x + \dfrac{h}{m}p - \dfrac{h^2}{m} \gamma p - \dfrac{h^2}{m} \nabla f(x) + O(h^3),
\end{align}
\end{subequations}
where it is implicit that $\nabla f$ and $\nabla^2 f$ are computed at $(x,p)$.
From \eqref{class_harm} we readily have
\begin{subequations} \label{nag_cont_exp}
\begin{align}
p(t_k+h) &= p - h \nabla f - h\gamma p - \dfrac{h^2}{2m} \nabla^2 f p
  +\dfrac{h^2}{2} \gamma \nabla^2 f p + \dfrac{h^2}{2} \gamma \nabla f + \dfrac{h^2}{2}\gamma^2 p + O(h^3), \\
  x(t_k+h) &= x + \dfrac{h}{m} p -\dfrac{h^2}{2m} \gamma p + O(h^3).
\end{align}
\end{subequations}
Hence, by comparison with \eqref{nag_exp} we conclude that
$x_{k+1} = x(t_k+h) + O(h^2)$ and $p_{k+1} = p(t_k + h) + O(h^2)$, which according to Definition~\ref{order} means that NAG is an integrator of order $r=1$.

Second, we investigate how NAG deforms the symplectic structure.
Consider the variational form of \eqref{nag_mom} (the notation is standard \cite{Hairer}):
\begin{subequations}
\label{nagsymp}
\begin{align}
dx_{k+1/2}&=dx_k + \dfrac{h}{m}e^{-\gamma h} dp_k, \\
dp_{k+1}&=e^{-\gamma h}dp_k - h \nabla^2f(x_{k+1/2})dx_{k+1/2},  \\
dx_{k+1} &= dx_k + \dfrac{h}{m} dp_{k+1}.
\end{align}
\end{subequations}
Using these, bilinearity and the antisymmety of the wedge product, together the fact that $\nabla^2 f$ is symmetric, we obtain
\begin{equation}
\label{nag_not_symp}
\begin{split}
dx_{k+1} \wedge d p_{k+1} &= dx_{k} \wedge dp_{k+1} \\
&= e^{-\gamma h } dx_{k} \wedge dp_k -
h  dx_{k} \wedge \nabla^2 f\big(x_{k+1/2}) dx_{k+1/2} \\
&= e^{-\gamma h } dx_{k} \wedge dp_k -\dfrac{h^2}{m} e^{-\gamma h} dx_{k} \wedge \nabla^2f(x_{k+1/2}) dp_k \\
&= e^{-\gamma h } dx_{k} \wedge dp_k -\dfrac{h^2}{m} e^{-\gamma h} dx_{k} \wedge \nabla^2f(x_{k}) dp_k
+ O(h^3),
\end{split}
\end{equation}
where in the last passage we used a Taylor approximation for $x_{k+1/2}$.
  Thus, $dx_{k+1} \wedge dp_{k+1} \ne e^{-\gamma h} dx_{k}\wedge dp_{k}$, showing
  that the method is not conformal symplectic (see Definition~\ref{confmap}). Moreover, using the symmetry of $\nabla^2 f$ we can write \eqref{nag_not_symp} as \eqref{nest_omega}.
\end{proof}

While CM exactly preserve the same dissipation found in the underlying 
continuous-time system, NAG
introduces some extra contraction or expansion of the symplectic form, depending whether $\nabla^2 f$ is positive
definite or not.
  From \eqref{nest_omega}, in $k$ iterations of NAG, and neglecting the $O(h^3)$ error term, we have
  \begin{equation}
    \begin{split}
    \omega_k &\approx e^{-\gamma t_k}
    \prod_{i=1}^k \left[ I - \dfrac{h^2}{m} \nabla^2 f(x_{k-i}) \right] \omega_0  \\
    &\approx e^{-\gamma t_k} \left[ I - \dfrac{h^2}{m}\big(\nabla^2 f(x_{k-1}) - \nabla^2f(x_{k-2}) - \dotsm - \nabla^2 f(x_0)\big) \right] \omega_0 .
  \end{split}
  \end{equation}
  This depends on the entire history of the Hessians from the initial point.
  Therefore, NAG contracts the symplectic form slightly more than the underlying conformal Hamiltonian system---assuming $\nabla^2 f$ is positive definite---and
  it does so in a way that depends on the Hessian of the objective function. Note that this is a small effect of $O(h^2)$.
  Moreover, if $\nabla^2 f$ has negative eigenvalues, e.g. $f$ is nonconvex and has saddle points, then NAG actually introduces some
  spurious excitation in that direction.
  To gain some intuition, consider the simple case of a quadratic function:\footnote{
  This quadratic function  is actually enough to capture
  the behaviour when close to a critical point $x^\star$ since $f(x) \approx f(x^\star) + \tfrac{1}{2} \nabla^2 f(x^\star) x$
  and one can work on rotated coordinates where $\nabla^2 f(x^\star) = \diag(\lambda_1, \dotsc, \lambda_n)$.
  }
  \begin{equation} \label{quad1d}
    f(x) = (\lambda /2) x^2
  \end{equation}
  for some constant $\lambda$. Thus \eqref{nest_omega} becomes
  \begin{equation} \label{nag_contract_quad}
    \omega_{k+1} \approx e^{-\gamma h + \log(1 - h^2\lambda / m)} \omega_k \approx e^{-(\gamma + h \lambda / m)h  } \omega_k
    \quad \implies \quad \omega_k \approx e^{-(\gamma + h\lambda/m) t_k} \omega_0.
  \end{equation}
  This suggests that, effectively, the original damping of the system is being replaced by $\gamma \to \gamma + h \lambda / m$. Thus, if $\lambda > 0$ there is  some spurious damping, whereas if $\lambda < 0$ there is some spurious excitation.
  We will confirm this conclusion from another perspective
  in Section~\ref{proof_nag_contract_omega} below.

\subsection{Alternative form}
It is perhaps more common to find Nesterov's method in the following form \cite{Nesterov:1983}:
\begin{equation}
\label{nag3}
x_{k+1} = y_k -\epsilon \nabla f (y_k), \qquad
y_{k+1} = x_{k+1} + \mu_{k+1} (x_{k+1}-x_k),
\end{equation}
where $\mu_{k+1} = k/(k+3)$.
This is equivalent to \eqref{nag2}, as can be seen by
introducing the variable
$v_k \equiv x_k - x_{k-1}$
and writing the updates
in terms of $x$ and $v$.
When $\mu_k$ is constant,
Theorem~\ref{nag_contract_omega} shows that the method is not conformal symplectic.
When $\mu_k = k/(k+3)$, the differential equation associated
to \eqref{nag3} is equivalent to
\eqref{confhameq}/\eqref{hamiltonian1} with  $\gamma = 3 / t$.
It is possible to generalize the above results for time dependent cases \cite{Franca:2020}.
Therefore, also in this case, NAG does not preserve the symplectic structure; we note that
\eqref{nest_omega} still holds with $e^{-\gamma h} \to e^{-3 \log(1 + h/t_k )}$ where $t_k=h k$.

\subsection{Preserving stability and continuous-time rates}
An important question is whether being ``symplectic'' is beneficial or not for optimization.
Very recently, it has been shown \cite{Franca:2020} that symplectic discretizations of dissipative systems may indeed preserve
continuous-time rates of convergence when $f$ is smooth and the system
is appropriately dampened (choice of $\gamma$); the continuous-time rates can be
obtained via  Lyapunov analysis.
Thus, assuming that we have a suitable conformal Hamiltonian system, conformal symplectic integrators such as the general method \eqref{gen_leap},
provide a principled approach to construct
optimization algorithms that are guaranteed to respect the main properties
of the system, such as stability of critical points and convergence rates.
Furthermore,
we claim that there is a delicate tradeoff where being conformal symplectic is related to an improved stability, in the sense that the method can operate with larger step sizes, while
the spurious dissipation introduced by NAG (Theorem~\ref{nag_contract_omega})
may improve the convergence rate slightly,
since it introduces more contraction, but at the cost of making the method less stable; we show these details in Section~\ref{stability_appendix}.
Next, we also provide important additional insights into
CM and NAG, such as their  modified or perturbed equations
and their \emph{shadow Hamiltonians},
which describe these methods
to a higher degree of resolution.


\subsection{Shadow dynamical systems for Nesterov and heavy ball}  \label{proof_nag_contract_omega}

We have shown above that both CM and NAG are a first order integrators to the conformal Hamiltonian system \eqref{class_harm}, however NAG
changes slightly the behaviour of the original system since it introduces spurious damping or excitation.
To understand its behaviour more closely, one can ask the following question: \emph{for which continuous-time dynamical system
NAG turns out to be a second order integrator?}
In other words, we can look for a modified system that captures the behaviour of NAG more closely, up to $O(h^3)$.
Every numerical method is known to have a modified or perturbed differential equation \cite{Hairer} (the brief discussion in \cite{Franca:2020} may also be useful).
In answering this question, we thus find the following.

\begin{theorem}[Shadow dynamical system for Nesterov's method] \label{nag_shadow}
NAG \eqref{nag2}, or its equivalent phase space representation \eqref{nag_mom},
is a second order integrator to the following modified or perturbed equations:
\begin{equation} \label{mod_eq_nag}
  \dot{x} = \dfrac{1}{m} p - \dfrac{\gamma h }{2 m} p -\dfrac{h}{2m} \nabla f(x) , \qquad
  \dot{p} = -\nabla f(x) -\gamma p -\dfrac{h \gamma}{2} \nabla f - \dfrac{h}{2m} \nabla^2 f(x) p.
\end{equation}
\end{theorem}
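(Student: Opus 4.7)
The plan is to perform a standard backward-error analysis, i.e.\ make an ansatz for a perturbed vector field of order $h$ and fix its coefficients by matching the flow of the perturbed system, expanded in $h$, against the one-step expansion of NAG that we already computed in the proof of Theorem~\ref{nag_contract_omega}. Concretely, I would posit
\begin{equation*}
\dot{x} = \tfrac{1}{m}p + h\,F_1(x,p) + O(h^2), \qquad
\dot{p} = -\nabla f(x) -\gamma p + h\,G_1(x,p) + O(h^2),
\end{equation*}
with $F_1,G_1$ to be determined, and show that there is a unique choice making NAG a discretization of order $r=2$ for this perturbed system.

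Next I would Taylor-expand the exact flow of this candidate system around $t=t_k$ up to $O(h^3)$. Writing $x(t_k+h)=x+h\dot{x}+\tfrac{h^2}{2}\ddot{x}+O(h^3)$ and similarly for $p$, I would compute the second derivatives by differentiating the ODE once more and substituting the leading-order vector field: $\ddot{x}=\tfrac{1}{m}\dot{p}+O(h)=\tfrac{1}{m}(-\nabla f-\gamma p)+O(h)$ and $\ddot{p}=-\nabla^2 f\,\dot{x}-\gamma\dot{p}+O(h)=-\tfrac{1}{m}\nabla^2 f\,p+\gamma\nabla f+\gamma^2 p+O(h)$. All $h\,F_1$ and $h\,G_1$ contributions only produce $h^2$ corrections at this order, which is exactly where we want to register the deviation from the original system.

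The final step is algebraic matching. I would line up the resulting expansions against the NAG expansions already obtained in \eqref{nag_exp}, namely $x_{k+1}-x=\tfrac{h}{m}p-\tfrac{h^2}{m}\gamma p-\tfrac{h^2}{m}\nabla f+O(h^3)$ and $p_{k+1}-p=-h\gamma p-h\nabla f+\tfrac{h^2}{2}\gamma^2 p-\tfrac{h^2}{m}\nabla^2 f\,p+O(h^3)$. Since all $O(1)$ and $O(h)$ terms already agree with the unperturbed flow, the match at $O(h^2)$ reduces to two linear equations in $F_1$ and $G_1$, whose unique solutions are $F_1=-\tfrac{\gamma}{2m}p-\tfrac{1}{2m}\nabla f$ and $G_1=-\tfrac{1}{2m}\nabla^2 f\,p-\tfrac{\gamma}{2}\nabla f$. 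Substituting these back into the ansatz yields \eqref{mod_eq_nag}, and by construction $\|x(t_k+h)-x_{k+1}\|,\|p(t_k+h)-p_{k+1}\|=O(h^3)$, which is the definition of order $r=2$ (Definition~\ref{order}).

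The only real obstacle is bookkeeping: one must be careful that, when expanding $\ddot{x}$ and $\ddot{p}$ for the modified system, the $O(h)$ corrections $F_1,G_1$ do not enter at $O(h^2)$ in the flow, so they can be fitted independently from the known unperturbed pieces. This is guaranteed because $F_1$ appears in $\ddot{x}$ only through $h\dot F_1$, which is $O(h)$, and similarly for $G_1$; hence the ansatz is self-consistent and no higher-order iteration is needed to close the system.
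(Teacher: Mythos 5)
Your proposal is correct and follows essentially the same route as the paper: posit a perturbed vector field $\dot{x}=\tfrac{p}{m}+hF$, $\dot{p}=-\nabla f-\gamma p+hG$, and determine $F,G$ by matching the $O(h^2)$ discrepancy between the NAG one-step expansion \eqref{nag_exp} and the exact flow expansion \eqref{nag_cont_exp} (the paper phrases this as $F=\lim_{h\to 0}(x_{k+1}-x(t_k+h))/h^2$ and likewise for $G$, which is exactly your linear matching). Your resulting $F_1=-\tfrac{\gamma}{2m}p-\tfrac{1}{2m}\nabla f$ and $G_1=-\tfrac{1}{2m}\nabla^2 f\,p-\tfrac{\gamma}{2}\nabla f$ agree with \eqref{mod_eq_nag}.
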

\begin{proof}
We look for vector fields $F(q,p;h)$ and $G(q,p;h)$ for the modified system
\begin{equation} \label{mod1}
  \dot{x} = \dfrac{p}{ m}  + h F, \qquad \dot{p} = -\nabla f(x) - \gamma p + h G,
\end{equation}
such that \eqref{nag_mom} is an integrator of order $r=2$. This can be done by computing \cite{Hairer}
\begin{equation}
F = \lim_{h \to 0} \dfrac{x_{k+1} - x(t_k + h)}{h^2}, \qquad
G = \lim_{h \to 0} \dfrac{p_{k+1} - p(t_k + h)}{h^2}.
\end{equation}
From \eqref{nag_exp} and \eqref{nag_cont_exp} we obtain precisely
\eqref{mod_eq_nag}.  By the previously discussed approach through Taylor expansions
one can also readily check that NAG is indeed an integrator
of order $r=2$ to this perturbed system.
\end{proof}

Note that we can combine  \eqref{mod_eq_nag} into a second order differential equation:
\begin{equation}\label{mod_eq_single_nag}
m \ddot{x} + m \left( \gamma I + \dfrac{h}{m} \nabla^2 f(x)\right) \dot{x} =
-\left( I + \dfrac{h \gamma}{2} I - \dfrac{h^2 \gamma^2}{4} I + \dfrac{h^2}{4 m} \nabla^2 f(x) \right) \nabla f(x),
\end{equation}
where $I$ denotes the $n\times n$ identity matrix.
We see that this equation has several new ingredients compared to
\begin{equation} \label{sec_eq}
\ddot{x} + \gamma \dot{x} = -(1/m) \nabla f(x),
\end{equation}
 which is equivalent to \eqref{class_harm}. First, when $h \to 0$ the system \eqref{mod_eq_single_nag}
 recovers \eqref{sec_eq}, as it should
since both must agree to leading order. Second,  the spurious change in the damping coefficient  reflects
the behaviour of the symplectic form \eqref{nest_omega} (see also \eqref{nag_contract_quad}). Third,  we  see
that the gradient $\nabla f$ is rescaled by the contribution of  several terms, including the Hessian $\nabla^2 f$, making explicit
a curvature dependent behaviour, which also appears in the damping coefficient.
Note that the modified equation \eqref{mod_eq_single_nag}, or equivalently \eqref{mod_eq_nag},
depends on the step size $h$, hence it captures an intrinsic behaviour of the discrete-time algorithm that is not captured by \eqref{class_harm}.


Since CM is also a first order integrator to \eqref{class_harm}, which is actually conformal symplectic,
it is natural to consider its modified equation and compare with the one for NAG \eqref{mod_eq_nag}.
We thus obtain the following.

\begin{theorem}[Shadow Hamiltonian for heavy ball] \label{cm_shadow}
The heavy ball or CM method \eqref{momgd2}, equivalently written in the phase space as \eqref{simpleap},
is a second order integrator to the following modified conformal Hamiltonian system:
\begin{equation} \label{mod_eq_cm}
  \dot{x} = \dfrac{1}{m} p - \dfrac{h \gamma}{2m} p - \dfrac{h}{2m} \nabla f(x), \qquad
  \dot{p} = - \nabla f(x) - \gamma p - \dfrac{h \gamma}{2} \nabla f(x) + \dfrac{h}{2m} \nabla^2 f(x) p .
\end{equation}
Such a system admits the shadow Hamiltonian
\begin{equation}
\label{shadow_ham_cm}
\tilde{H} = \dfrac{1}{2m}\| p\|^2  + f(x) - \dfrac{h \gamma }{4m}\| p \|^2 - \dfrac{h}{2m} \langle \nabla f(x), p \rangle + \dfrac{h \gamma}{2} f .
\end{equation}
\end{theorem}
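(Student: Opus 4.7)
The plan is to mirror the proof of Theorem~\ref{nag_shadow}, simplified by the fact that CM's phase-space form \eqref{simpleap}, namely $p_{k+1} = e^{-\gamma h} p_k - h\nabla f(x_k)$ followed by $x_{k+1} = x_k + (h/m)\, p_{k+1}$, is fully explicit. First I would expand $e^{-\gamma h} = 1 - \gamma h + \gamma^2 h^2/2 + O(h^3)$, substitute into the momentum update to obtain $p_{k+1}$ to second order, and then back-substitute into the position update for $x_{k+1}$ to the same order. In parallel, I would Taylor-expand the true flow of the underlying system \eqref{class_harm} to $O(h^3)$, using $\ddot{x} = -\nabla f(x)/m - \gamma p/m$ and $\ddot{p} = -(1/m)\nabla^2 f(x)\, p + \gamma \nabla f(x) + \gamma^2 p$, obtained by differentiating \eqref{class_harm} once.

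Following the same recipe as in the proof of Theorem~\ref{nag_shadow}, I would then define the perturbation vector fields
\[
F = \lim_{h\to 0} \frac{x_{k+1} - x(t_k+h)}{h^2}, \qquad G = \lim_{h \to 0} \frac{p_{k+1} - p(t_k + h)}{h^2},
\]
and read them off by subtracting the two expansions. The expected outcome is $F = -(\gamma/(2m))\, p - (1/(2m))\nabla f(x)$ and $G = (1/(2m))\nabla^2 f(x)\, p - (\gamma/2)\nabla f(x)$; inserting these into $\dot x = p/m + h F$ and $\dot p = -\nabla f(x) - \gamma p + h G$ yields exactly the modified equations \eqref{mod_eq_cm}. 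A Taylor comparison of the CM iterates against the flow of \eqref{mod_eq_cm} then confirms the residuals are $O(h^3)$, so CM is a second-order integrator for the modified system.

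For the shadow Hamiltonian claim, I would simply differentiate \eqref{shadow_ham_cm} and verify the conformal Hamilton equations directly: $\nabla_p \tilde H = p/m - (h\gamma/(2m))\, p - (h/(2m))\nabla f(x)$ reproduces the first line of \eqref{mod_eq_cm}, while $-\nabla_x \tilde H - \gamma p = -\nabla f(x) - \gamma p + (h/(2m))\nabla^2 f(x)\, p - (h\gamma/2)\nabla f(x)$ reproduces the second, exhibiting \eqref{mod_eq_cm} as a conformal Hamiltonian system generated by $\tilde H$. The main obstacle is merely careful $O(h^2)$ bookkeeping; what is conceptually noteworthy is the sign of the Hessian term in $G$, namely $+(1/(2m))\nabla^2 f(x)\, p$ for CM versus $-(1/(2m))\nabla^2 f(x)\, p$ for NAG (cf.~\eqref{mod_eq_nag}). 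This sign flip is precisely what turns NAG's spurious Hessian-dependent contraction of $\omega$ into the symmetric cross-term $-(h/(2m))\langle\nabla f(x),p\rangle$ in $\tilde H$, consistent with CM being conformal symplectic by Corollary~\ref{momgd_symp} whereas NAG is not.
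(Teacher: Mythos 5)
Your proposal is correct and follows essentially the same route as the paper: the paper's proof of Theorem~\ref{cm_shadow} simply says it ``follows exactly as in Theorem~\ref{nag_shadow}'' (i.e.\ compute $F$ and $G$ from the Taylor expansions of the CM iterates and of the flow of \eqref{class_harm}) and then asserts the direct verification that $\tilde H$ generates \eqref{mod_eq_cm} via \eqref{confhameq}, which you carry out explicitly. Your computed $F = -\tfrac{\gamma}{2m}p - \tfrac{1}{2m}\nabla f$ and $G = \tfrac{1}{2m}\nabla^2 f(x)\,p - \tfrac{\gamma}{2}\nabla f$ and the gradient check of \eqref{shadow_ham_cm} are all consistent with the stated result.
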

\begin{proof}
It follows exactly as in Theorem~\ref{nag_shadow}. Also, one can readily  verify that replacing \eqref{shadow_ham_cm} into
\eqref{confhameq} gives \eqref{shadow_ham_cm}.
\end{proof}

We note the striking similarity between \eqref{mod_eq_cm} and \eqref{mod_eq_nag};
the only difference is the sign of the last term in the second equation.
Up to this level of resolution, the difference is that NAG introduces a spurious damping compared to CM, in agreement
with the derivation of the symplectic form \eqref{nest_omega}.
On the other hand, notice that the perturbed system \eqref{mod_eq_cm} for CM is conformal Hamiltonian, contrary to \eqref{mod_eq_nag} that cannot be written in Hamiltonian form;   this is the reason why structure-preserving discretizations tend to be more stable, since the perturbed trajectories are always close, i.e. within a bounded error, from the original Hamiltonian dynamics.
We can also combine \eqref{mod_eq_cm} into
\begin{equation}\label{mod_eq_single_cm}
m \ddot{x} + m \gamma \dot{x} =
- \left( I + \dfrac{h \gamma}{2} I  - \dfrac{h^2 \gamma^2}{4} I - \dfrac{h^2}{4 m} \nabla^2 f(x) \right) \nabla f(x).
\end{equation}
Again, this is strikingly similar to \eqref{mod_eq_single_nag}. Note that this equation does not have the
spurious damping term $(h/m) \nabla^2 f(x)$ as in \eqref{mod_eq_single_nag}, making even more explicit that it preserves exactly the dissipation of the original continuous-time system.   As we will show later, there is a balance between preserving such a dissipation and the stability of the method.  While NAG introduces an extra damping, and may slightly help in an improved convergence since it dissipates more energy, this comes at the price in a decreased stability. Before showing this explicitly in Section~\ref{stability_appendix}, we first introduce a new optimization methods based on a relativistic system.

\section{Dissipative relativistic optimization}
\label{relativistic}

Let us briefly
mention some simple but fundamental concepts to motivate
our approach.
The previous algorithms
are based on
\eqref{hamiltonian1} which leads to a classical Newtonian system where
time is just a parameter, independent
of the Euclidean space where the trajectories live. This implies that
there is no restriction on the speed,
$\|v\|  = \|dx/dt\|$, that a particle can attain. This
translates to a discrete-time algorithm, such as
\eqref{simpleap}, where
large gradients $\nabla f$ give rise to a large momenta
$p$, implying that the position
updates for $x$ can diverge.
On the other hand, in special relativity,
space and time
form a unified geometric entity,
the $(n+1)$-dimensional Minkowski
spacetime with
coordinates $X = (ct; x)$,
where $c$ denotes the speed of light.
An infinitesimal distance on this manifold is given by
$ds^2= -(c dt)^2 + \| dx \|^2$.
Null geodesics correspond
to $ds^2 = 0$, implying
$\| v\|^2 = \| dx/dt\|^2 = c^2$, i.e.
no particle can travel faster than $c$.
This imposes constraints
on the geometry where trajectories take place---it is actually a hyperbolic geometry.
With that being said,
the  idea is that by discretizing
a relativistic system we can incorporate these features into an
optimization algorithm which may  bring benefits such as
an improved stability.

A relativistic  particle subject to a potential $f$
is described by the following Hamiltonian:
\begin{equation}
  \label{relham}
  H(x,p) = c \sqrt{\| p \|^2 + m^2 c^2 } + f(x).
\end{equation}
In the classical limit, $\| p\| \ll m c$, one obtains
$H = mc^2 + \| p\|^2/(2m) + f(x) + O(1/c^2)$, recovering
\eqref{hamiltonian1} up to the  constant $E_0 = mc^2$, which has no effect
in deriving the equations of motion.
Replacing \eqref{relham}
into~\eqref{confhameq}
we thus obtain a \emph{dissipative relativistic system}:
\begin{equation}
\label{confrel}
\dot{x} = \dfrac{c  p}{  \sqrt{  \| p\|^2 + m^2 c^2 } } , \qquad
\dot{p} = -\nabla f - \gamma p.
\end{equation}
Importantly, in \eqref{confrel}
the momentum is normalized by the $\sqrt{\cdot}$ factor,
so  $\dot{x}$ remains bounded even if $p$ was to go unbounded.
Now, replacing \eqref{relham} into the first order accurate conformal symplectic
integrator \eqref{gen_euler}, we readily obtain
\begin{equation}
\label{rgd0}
p_{k+1} = e^{-\gamma h}  p_k - h \nabla f(q_k),  \qquad
x_{k+1} = x_k +
\dfrac{h c  p_{k+1} }{ \sqrt{   \| p_{k+1}\|^2 + m^2 c^2 } }.
\end{equation}
When $c \to \infty$ the above updates recover CM \eqref{simpleap}. Thus, this method is a relativistic generalization
of CM or heavy ball. Moreover, the method  \eqref{rgd0} is a first order
conformal symplectic integrator by construction (see Theorems~\ref{confsymp} and \ref{accuracy}).

One can replace the Hamiltonian \eqref{relham} into \eqref{gen_leap} to obtain
a second order version of \eqref{rgd0}.  However, motivated by the close connection between NAG
and \eqref{leapclass}---recall the comments following \eqref{almost_nag} about NAG ``rolling back'' the last update---let us additionally
introduce a convex
combination, $\alpha x_{k+1/2} + (1-\alpha) x_k$ where $0\le \alpha \le 1$, between the initial and  midpoint
of the method.  In this manner, we can interpolate between a conformal symplectic regime and
  a spurious Hessian damping regime (recall Theorem~\ref{nag_contract_omega}).
Therefore, we obtain the following integrator:
\begin{subequations} \label{rgd}
\begin{align}
x_{k+1/2} &= x_k + (h c/2)  e^{-\gamma h/2} p_k \Big/ \sqrt{  e^{-\gamma h} \| p_k\|^2 + m^2 c^2} , \\
p_{k+1/2} &= e^{-\gamma h/2} p_k - h \nabla f(x_{k+1/2}), \\
x_{k+1} &= \alpha x_{k+1/2} + (1-\alpha)x_{k} + (h c /2) p_{k+1/2} \Big/ \sqrt{  \| p_{k+1/2}\|^2 +m^2 c^2 }, \\
p_{k+1} &= e^{-\gamma h/2} p_{k+1/2}.
\end{align}
\end{subequations}
We call this method \emph{Relativistic Gradient Descent} (RGD). By introducing
\begin{equation}
  v_k \equiv \dfrac{h}{2m} p_k, \qquad \epsilon \equiv \dfrac{h^2}{2m},
  \qquad \mu \equiv e^{-\gamma h}, \qquad
  \delta \equiv \dfrac{4}{ c^2 h^2},
\end{equation}
the updates \eqref{rgd} assume the equivalent form stated in Algorithm~\ref{rgd1} in the introduction.

RGD \eqref{rgd} (resp. Algorithm~\ref{rgd1}) has several interesting limits,
recovering the behaviour of known algorithms as particular cases.
For instance,
when $c \to \infty$ (resp. $\delta \to 0$) it reduces to an interpolation
between CM \eqref{simpleap} (resp. \eqref{momgd2}) and NAG \eqref{nag_mom} (resp. \eqref{nag2}).
If we additionally set $\alpha = 0$ it becomes precisely NAG, whether when $\alpha=1$ it becomes
a second order version (in terms of accuracy) of CM.\footnote{The dynamics of both CM and this second order version is pretty close, and if anything the latter is even more stable than the former (see Section~\ref{stability_appendix}).}
When $\alpha = 1$, and arbitrary $c$ (or $\delta$), RGD is a conformal symplectic integrator
thanks to Theorems~\ref{confsymp}. Recall also that Theorem~\ref{accuracy}  implies that RGD
is a second order accurate integrator.
When $\alpha =0 $, and arbitrary $c$ (or $\delta$), RGD is no longer conformal symplectic and introduces
a Hessian driven damping in the spirit of NAG.
Finally, the parameter $c$ (or $\delta$) controls the strength of the normalization
term in the position updates of \eqref{rgd} (or Algorithm~\ref{rgd1}), which can help preventing divergences
when navigating through a rough landscape with large gradients, or fast growing tails. Indeed, note that
$\| x_{k+1} - \alpha x_{k+1/2} - (1-\alpha) x_k \| \le 1/\delta$ is always bounded for $\delta > 0$;
this becomes unbounded when $\delta \to 0$, i.e. in the classical limit of CM and NAG.

In short, RGD is a novel algorithm  with quite some flexibility and unique features,
generalizing perhaps the two most important accelerated gradient based  methods in the literature, which can be recovered as limiting cases.  Next, we illustrate numerically through
simple yet insightful examples  that RGD can
be more stable and faster than CM and NAG.

\section{Tradeoff between stability and convergence rate} \label{stability_appendix}

Here we illustrate an interesting phenomenon: there is a tradeoff between stability versus
convergence rate.  Intuitively, an improved rate is associated to a higher ``contraction,'' i.e. the introduction
of spurious dissipation in the numerical method.   However, this makes the method less stable, and ultimately
very sensitive to parameter tuning.  On the other hand, a geometric or structure-preserving integrator may
have slightly less contraction, since it preserves the original dissipation of the continuous-time system exactly,
but it is more stable and able to operate with larger step sizes.  Furthermore, a structure-preserving
method is guaranteed to reproduce very closely, perhaps even up to a negligible error, the continuous-time rates of convergence \cite{Franca:2020}.  This indicates that there may have benefits in considering this class of
methods for optimization, such as conformal symplectic integrators that are being advocated in this paper.

Stability of a numerical
integrator means the region of hyperparameters, e.g. values of the step size, such that the method is able to converge.
The larger this region, more stable is the method.
The convergence rate is a measure of how fast the method tends to the minimum, and
this is related to the amount of contraction between subsequent states, or subsequent values of the objective function.  For instance,
since NAG introduces some spurious dissipation---recall \eqref{nest_omega} and \eqref{mod_eq_single_nag}---we
expect that it may have a slightly higher contraction compared to CM, which exactly
preserves the dissipation of the continuous-time system---recall \eqref{mod_eq_single_cm}.
Thus, such a spurious dissipation can induce a slightly improved convergence rate, but  as we
will show below, at the cost of making the method more unstable and thus requiring smaller step sizes.

Let us consider a standard linear stability analysis, which involves
a quadratic function \eqref{quad1d} such that the previous methods can be treated analytically.
Thus, replacing \eqref{quad1d} into CM in the form \eqref{simpleap} it is possible to write the algorithm
as a linear system:
\begin{equation}
  z_{k+1} = T_{\textnormal{CM}} z_k, \qquad
  T_{\textnormal{CM}} = \begin{bmatrix} 1 - h^2 \lambda / m & (h/m) e^{-\gamma h} \\ -h \lambda & e^{-\gamma h}\end{bmatrix},
\end{equation}
where we denote $z = \big[ \begin{smallmatrix} x \\ p \end{smallmatrix} \big]$.
Similarly, NAG in the form \eqref{nag_mom} yields
\begin{equation}
  z_{k+1} = T_{\textnormal{NAG}} z_k, \qquad
  T_{\textnormal{NAG}} = \begin{bmatrix} 1 - h^2 \lambda / m & (h/m) e^{-\gamma h}(1-h^2\lambda/m) \\ -h \lambda & e^{-\gamma h}(1-h^2\lambda / m)\end{bmatrix},
\end{equation}
while RGD \eqref{rgd}, with $c\to \infty$ and $\alpha = 1$, yields\footnote{The case of finite $c$ is nonlinear and not amenable to such an analysis. However, the case $c\to \infty$ already provides useful insights.}
\begin{equation}
  z_{k+1} = T_{\textnormal{RGD}} z_k, \qquad
  T_{\textnormal{RGD}} = \begin{bmatrix} 1 - h^2 \lambda / (2m) & h/(2m) e^{-\gamma h/2}(2-h^2\lambda/(2m)) \\ -h \lambda e^{-\gamma h /2} & e^{-\gamma h}(1-h^2\lambda / (2m))\end{bmatrix}.
\end{equation}
A linear system is stable if  the spectral radius of its transition matrix is $\rho(T) \le 1$. We can  compute
the eigenvalues of the above matrices and check for which range of parameters they remain inside the unit circle; e.g.
for  given $\gamma$, $m$, and $\lambda$ we can find the allowed range of the step size $h$ for which the maximum eigenvalue in absolute value  is $ |\lambda_{\textnormal{max}}| \le 1$.
Instead of showing the explicit formulas for these eigenvalues, which can be obtained quite simply but are cumbersome, let us illustrate what happens graphically.

\begin{figure}[t]
\centering
\includegraphics[width=\textwidth]{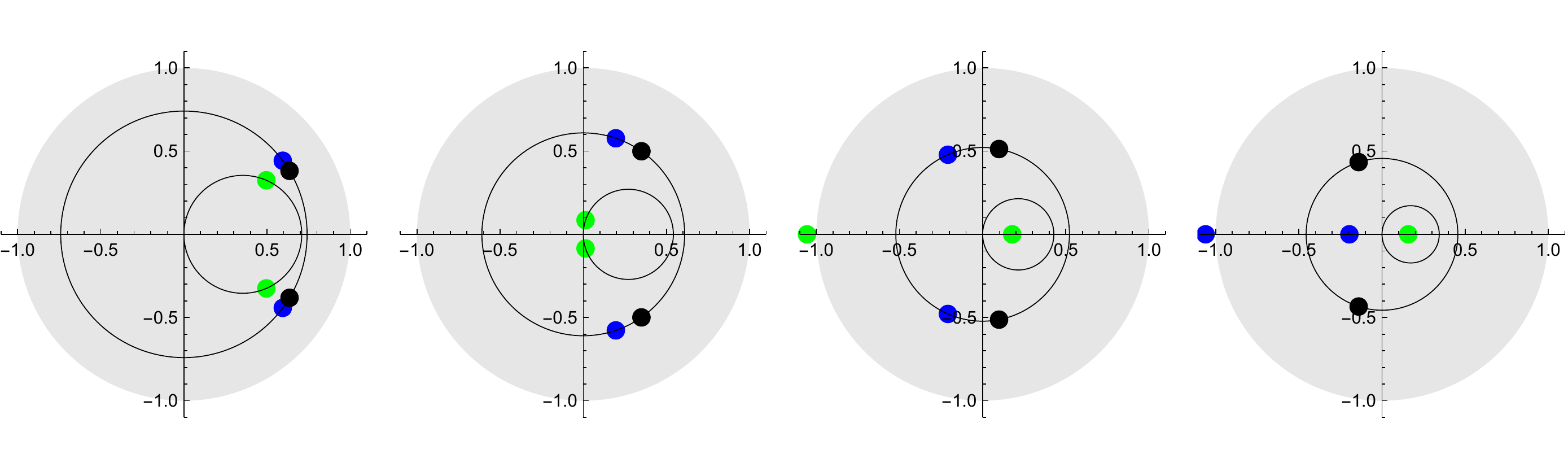}
\caption{\label{circles}
Stability of CM \eqref{simpleap} (\emph{blue}), NAG \eqref{nag_mom} (\emph{green}), and RGD \eqref{rgd} with $c\to \infty$ and $\alpha=1$ (\emph{black})---in this case it becomes a dissipative version of the Leapfrog to
system \eqref{class_harm}. We plot the eigenvalues in the complex plane; $x$-axis is the real part, $y$-axis
is the imaginary part.
The unit circle represent the stability region, i.e. once an
eigenvalue leaves the gray area the corresponding method becomes unstable. Both CM and RGD
are symplectic thus their eigenvalues always move on a circle of radius $e^{-\gamma h / 2}$
centered at the origin. NAG has eigenvalues in the smaller circle with radius
$1/(e^{\gamma h}+1)$ and centered at $1/(e^{\gamma h}+1)$ on the $x$-axis; the circle is
dislocated from the origin precisely due to spurious dissipation.
From left to right we increase the step size $h$ while keeping $\gamma$, $m$, and $\lambda$ fixed.
As $h$ increases the eigenvalues move on the circles in the counterclockwise direction until they fall on the real line. Eventually they leave the unit circle and the associated method becomes
unstable.  Note how CM has higher stability than NAG, and RGD has even higher stability than CM.
}
\end{figure}

In Fig.~\ref{circles}, the shaded gray area represents the unit circle. Any eigenvalue that
leaves this area makes  the associated algorithm unstable. Here we fix $m=\lambda = \gamma = 1$ (other choices are equivalent) and we vary the step size $h > 0$.  These eigenvalues are in general complex and  lie on a circle
which is determined by the amount of friction in the system. Note how for CM and RGD this circle is centered at
the origin, with radius $\sqrt{\mu} \equiv e^{-\gamma h /2}$, since these methods are conformal symplectic and exactly preserve the dissipation of the underlying continuous-time system. However, NAG introduces a spurious damping which is reflected as the circle being translated from the
center, at a distance $1/(e^{\gamma h} + 1)$, and moreover this circle has a smaller radius of $1/(e^{\gamma h} + 1)$ compared to CM and RGD; since this radius is smaller, NAG may have a faster convergence
when these eigenvalues are complex.   As we increase $h$ (left to right in Fig.~\ref{circles}), the eigenvalues move counterclockwise on the circles
until falling on the real line, where one of them goes to the left while the other goes to the right.
Eventually, the leftmost eigenvalue leaves the unit circle for a large enough $h$ (third panel in Fig.~\ref{circles}).
Note that NAG becomes unstable first, followed by CM, and only then by RGD. The main point is that CM and RGD can still be stable for much larger step sizes compared to NAG, and
RGD is even more stable than CM as seen in the rightmost plot in Fig.~\ref{circles}; this is a consequence
of RGD being an integrator of order $r=2$ whereas CM is of order $r=1$. 
Hence, even though NAG may have a slightly faster convergence (due to a stronger contraction), it requires
a smaller step sizes and its stability is more sensitive compared to a conformal symplectic method.
On the other hand, both CM and RGD can operate with larger step sizes, which in practice may even result in a faster solver compared to NAG.

To provide a more quantitative statement, after computing the eigenvalues of the
above transition matrices
for given $\mu \equiv e^{-\gamma h}$, $m$, and $\lambda$, we find the following threshold for stability:
\begin{align}
h_{\textnormal{CM}} &\le \sqrt{ m (1 + \mu + \mu^2 + \mu^3)} \big/ (\mu \sqrt{\lambda}), \\
h_{\textnormal{NAG}} &\le \sqrt{ m(1 + \mu + \mu^2 + \mu^3) } \big/ \sqrt{ \mu \lambda (1 + \mu + \mu^2) }, \\
h_{\textnormal{RGD}} &\le \sqrt{ 2m (1 + \mu + \mu^2 + \mu^3) } \big/ \sqrt{ \mu \lambda (1 + \mu) }.
\end{align}
We can clearly see that RGD has the largest region for $h$, followed by CM, then by NAG, in agreement with the results of Fig.~\ref{circles}.

\section{Numerical experiments}
\label{numerical}

Let us compare RGD (Algorithm~\ref{rgd1}) against NAG \eqref{nag2} and CM \eqref{momgd2} on some test problems.
We stress that all  hyperparameters of each of these methods were systematically  optimized through Bayesian optimization
\cite{Hyperopt} (the default implementation uses a Tree of Parzen estimators).
This yields \emph{optimal} and \emph{unbiased} parameters automatically.
Moreover, by checking the distribution of these hyperparameters during the tuning process  we can get intuition on the sensitivity of each method.
Thus, for each algorithm, we show its convergence rate
in Fig.~\ref{rates} when the best hyperparameters were used.
In addition, in Fig.~\ref{hist} we show the distribution of  hyperparameters during the Bayesian optimization  step---the parameters are indicated and color lines follow Fig.~\ref{rates}.  Such values
are obtained only when the respective algorithm was able to converge. We note that usually CM and NAG diverged more often than RGD which seemed  more robust to parameter choice.
Below we describe some of the optimization problems where such algorithms were tested over.
In Appendix~\ref{sec:extra_numerical} we provide several additional experiments illustrating the benefits of RGD.  The actual code related to our implementation
is extremely simple and can be found at \cite{code}.

\subsection{Correlated quadratic}
Consider $f(x) = (1/2) x^T Q x$ where $Q_{ij} = \rho^{|i-j|}$, $\rho=0.95$,
and $Q$ has size $50 \times 50$---this function was also used in \cite{Betancourt:2018}.
We initialize the position at random, $x_{0,i} \sim \mathcal{N}(0, 10)$, and the velocity as $v_0 = 0$.
The convergence results are shown in Fig.~\ref{corr_quad_rate}.
The distribution of parameters during tuning are  in Fig.~\ref{corr_quad_hist}, showing
that $\alpha \to  1$ is preferable. This gives evidence for an advantage in being conformal symplectic.  Note also that $\delta > 0$, thus ``relativistic effects'' played a
role in improving convergence.

\subsection{Random quadratic}
Consider $f(q) = (1/2) x^T Q  x$  where
$Q$ is a $500\times 500$ positive definite random matrix with
eigenvalues uniformly distributed in $[10^{-3},10]$.
Convergence rates are in Fig.~\ref{random_quad_rate} with the histograms
of parameter search in Fig.~\ref{random_quad_hist}. Again, there is a preference
towards $\alpha \to 1$, evidencing benefits in being conformal symplectic.

\begin{figure}[!t]
\centering
\subfloat[\label{corr_quad_rate} Correlated quadratic]{\includegraphics[width=0.25\textwidth,trim={0 0 0 0},clip]{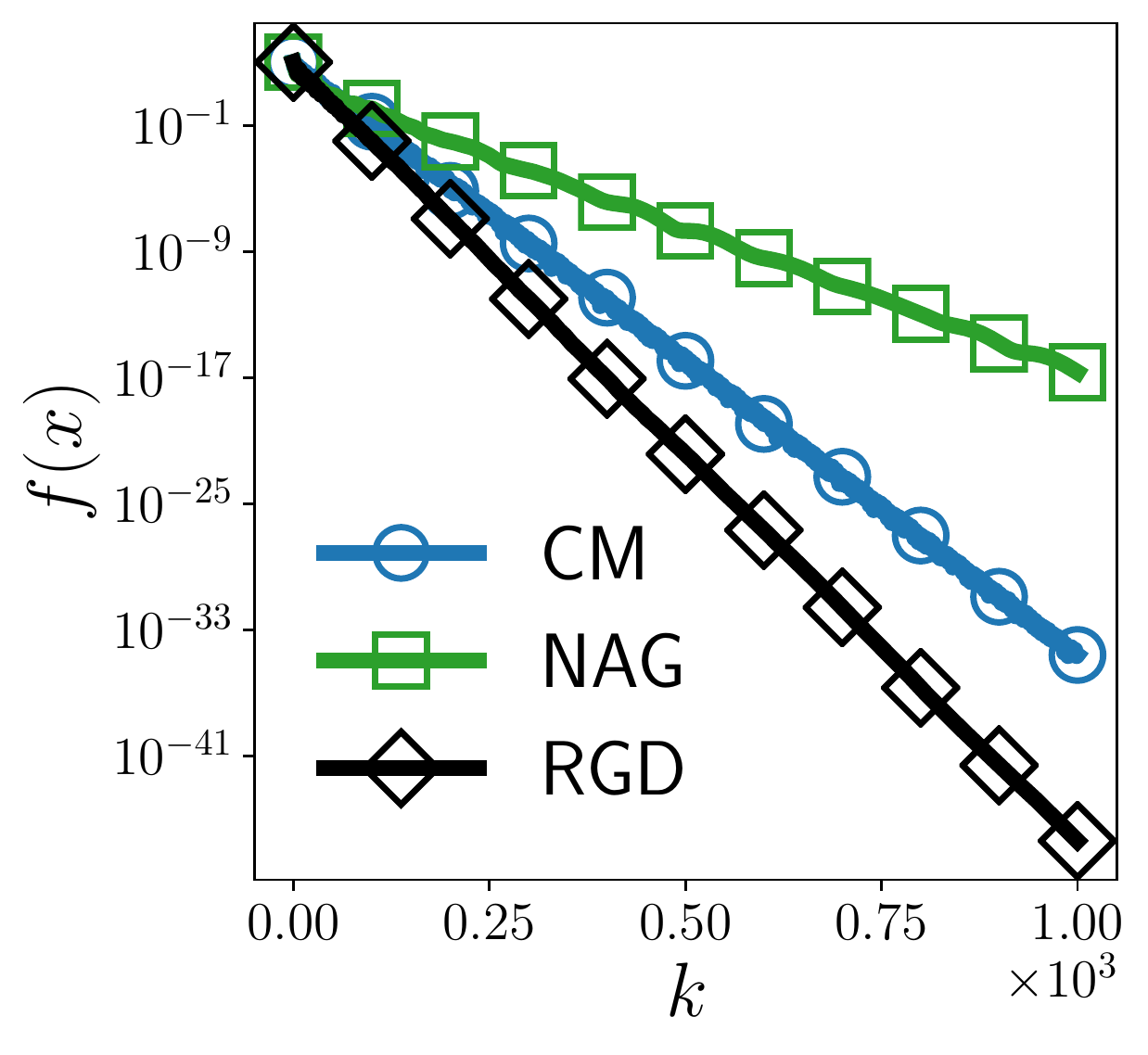}}
\subfloat[\label{random_quad_rate} Random quadratic]{\includegraphics[width=0.25\textwidth,trim={0 0 0 0},clip]{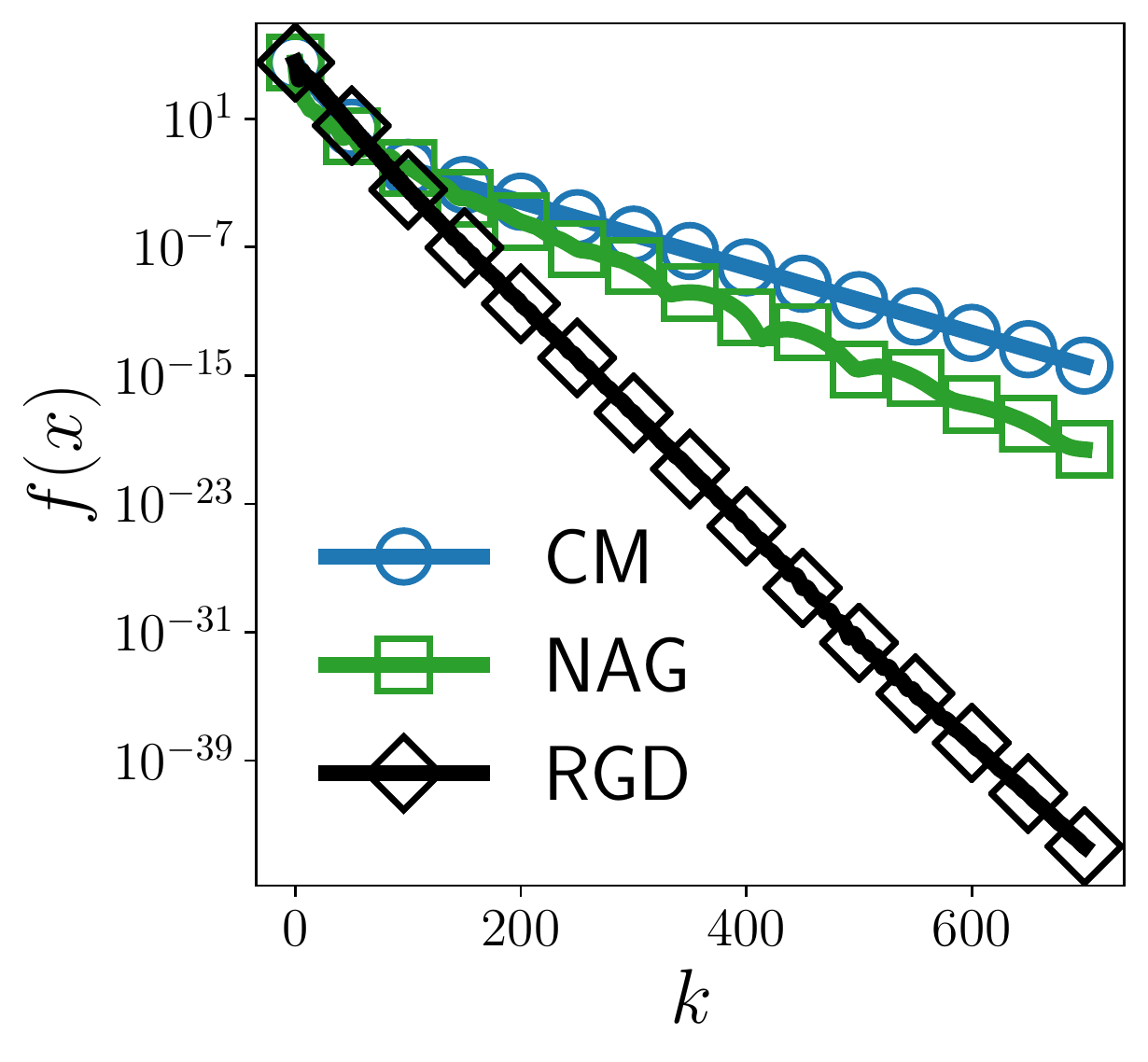}}
\subfloat[\label{ros_rate} Rosenbrock]{\includegraphics[width=0.25\textwidth,trim={0 0 0 0},clip]{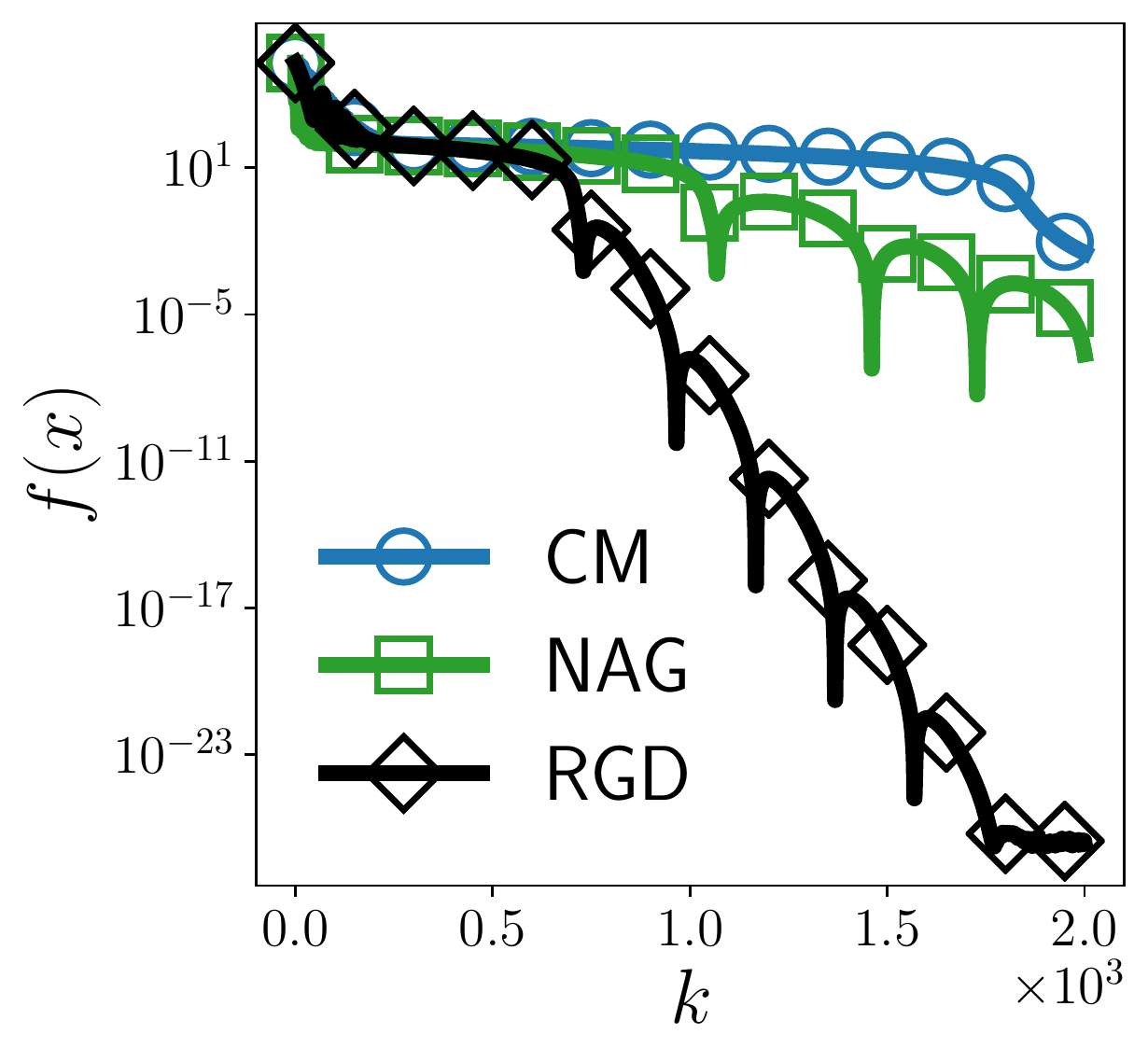}}
\subfloat[\label{mat_comp_rate} Matrix completion]{\includegraphics[width=0.25\textwidth,trim={0 0 0 0},clip]{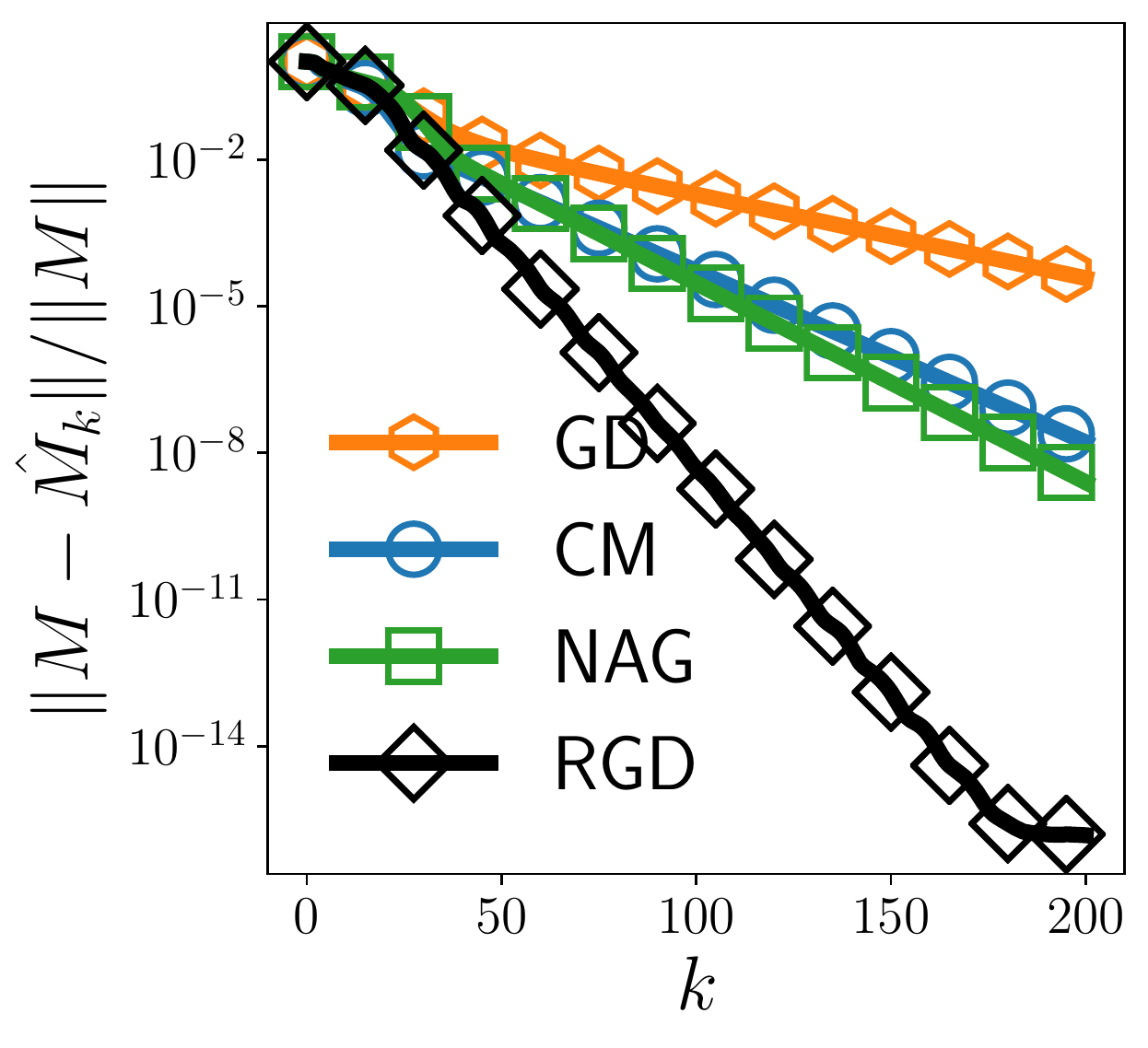}}
\caption{\label{rates}
Convergence rate showing improved performance of RGD (Algorithm~\ref{rgd1}); see text.}
\end{figure}

\begin{figure}[!t]
\centering
\subfloat[\label{corr_quad_hist} Correlated quadratic]{\includegraphics[width=0.25\textwidth,trim={0 0 0 0},clip]{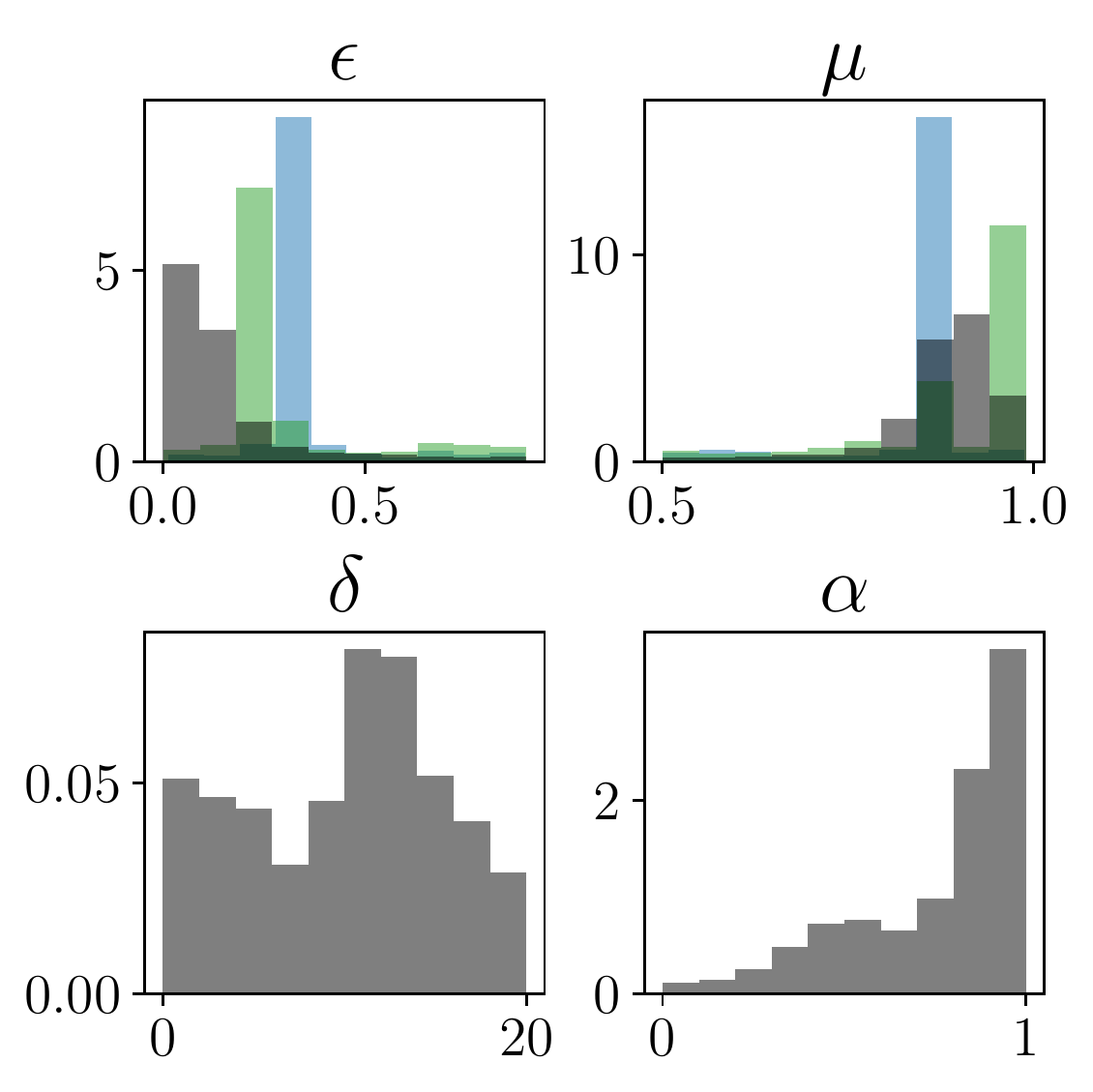}}
\subfloat[\label{random_quad_hist} Random quadratic]{\includegraphics[width=0.25\textwidth,trim={0 0 0 0},clip]{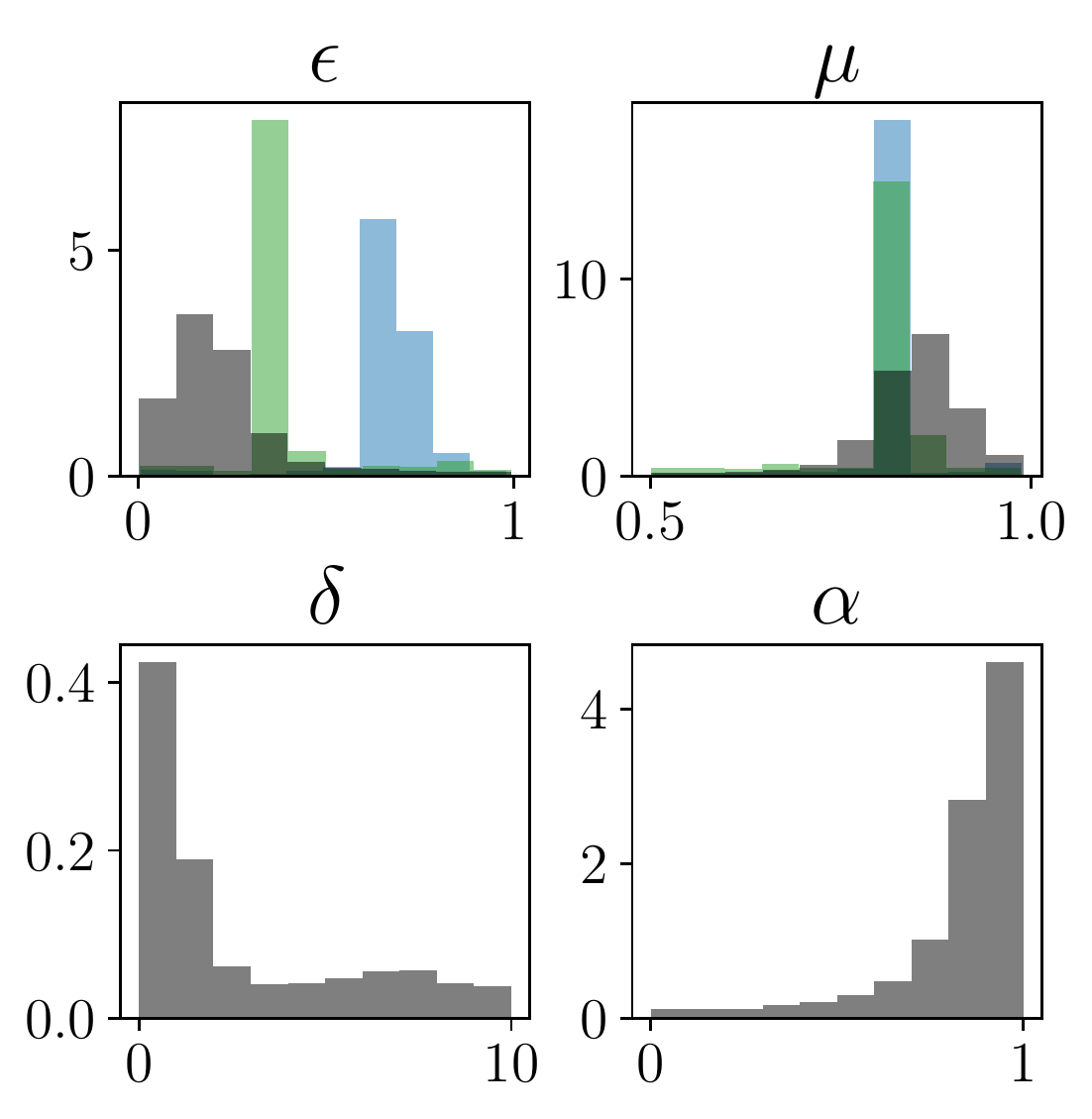}}
\subfloat[\label{ros_hist} Rosenbrock]{\includegraphics[width=0.25\textwidth,trim={0 0 0 0},clip]{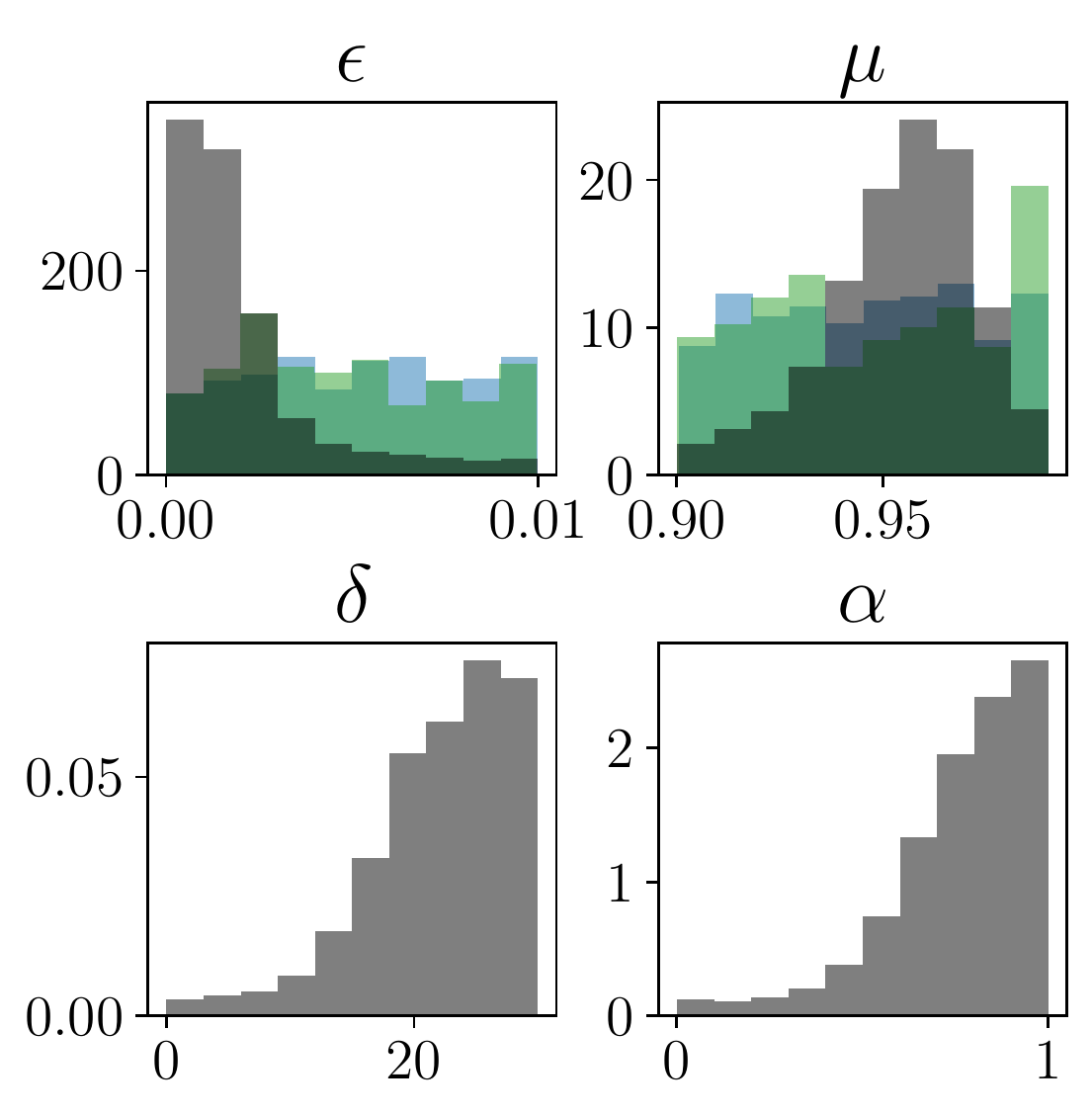}}
\subfloat[\label{mat_comp_hist} Matrix completion]{\includegraphics[width=0.25\textwidth,trim={0 0 0 0},clip]{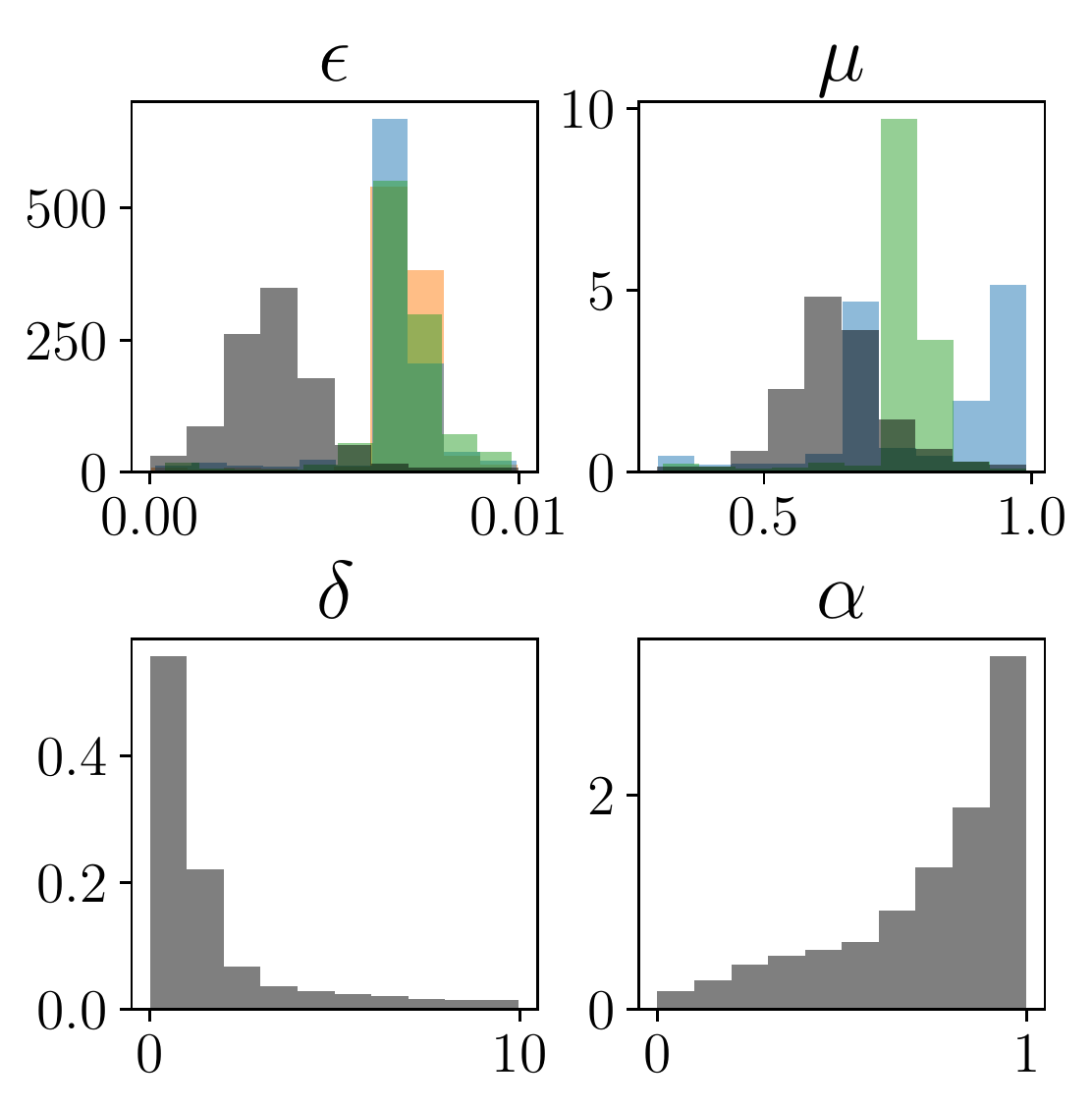}}
\caption{\label{hist}
Histograms of hyperparameter tuning by Bayesian optimization. 
Tendency towards $\alpha \approx 1$ indicates benefits of being symplectic, while $\alpha \approx 0$
of being extra damped as in NAG. Tendency towards $\delta > 0$ indicates benefits
of relativistic normalization. (Colors follow Fig.~\ref{rates}.)}
\end{figure}

\subsection{Rosenbrock}
For a challenging problem in higher dimensions,
consider the nonconvex Rosenbrock function
$f(x) \equiv \sum_{i=1}^{n-1}\big(100(x_{i+1}-x_i^2)^2
  +(1-x_i)^2\big)$ with
$n=100$
\cite{Rosenbrock:1960,Goldberg:1989};
this case
was already studied in detail \cite{Kok:2009}.
Its landscape is quite involved, e.g.
there are two minimizers,
one global at $x^\star = (1,\dotsc,1)^T$ with $f(x^\star) = 0$ and
one local near $x\approx(-1,1,\dotsc,1)^T$ with
$f \approx 3.99$. There are also---exponentially---many
saddle points \cite{Kok:2009}, however
only two of these are actually hard to escape.
These four stationary points account for $99.9\%$ of the
solutions found by Newton's method \cite{Kok:2009}.
We note that both minimizers lie on a flat, deep, and narrow valley, making
optimization challenging.
In Fig.~\ref{ros_rate} we have the convergence of each method
initialized at $x_{0,i}=\pm 2$ for $i$ odd/even.
Fig.~\ref{ros_hist}  shows histograms for parameter selection.
Again, we see the favorable symplectic tendency, $\alpha\to 1$.
Here relativistic effects, $\delta \ne 0$, played a predominant role in
the improved convergence of RGD.

\subsection{Matrix completion}
Consider an $n\times n$ matrix $M$ of rank $r \ll n$ with observed entries
in the support $(i,j) \in \Omega$, where
$P_{\Omega}(M)_{ij} = M_{ij}$ if $(i,j) \in \Omega$ and
$P_{\Omega}(M)_{ij} = 0 $ projects onto this support.
The goal is to recover $M$ from the knowledge of $P_{\Omega}(M)$.
We assume that the rank $r$ is known.  In this case, if the number of observed
entries is $O(rn)$ it is possible to recover $M$ with high
probability \cite{Montanari:2009}.
We do this by solving the nonconvex problem
$\min_{U,V} \|  P_{\Omega}(M - UV^T) \|_F^2$,
where $U, V \in \mathbb{R}^{n\times r}$, by \emph{alternating minimization}:
for each iteration we apply the previous algorithms first on $U$ with $V$ held
fixed, followed by similar updates for $V$ with the new $U$ fixed.
This is a know technique for gradient descent (GD), which we additionally include as a baseline.
We generate $M = RS^T$ where $R,S \in \mathbb{R}{n\times r}$ have iid entries
from the normal distribution $\mathcal{N}(1,2)$.
We initialize $U$ and $V$ sampled from the standard normal.
The support is chosen uniformly at random
with sampling ratio $s = 0.3$, yielding $p = s n^2$ observed entries.
We set $n=100$ and $r = 5$.
This gives a number of effective degrees of freedom $d = r(2n - r)$ and the ``hardness''
of the problem can be quantified via $d/p \approx 0.325$.
Fig.~\ref{mat_comp_rate} shows the convergence rate, and
Fig.~\ref{mat_comp_hist} the parameter search.

\section{Discussion and outlook}
\label{final}

This paper introduces a new perspective on a recent line of research connecting accelerated optimization
methods to continuous-time dynamical systems that have been playing a major role in  machine learning.
We brought \emph{conformal symplectic} techniques for \emph{dissipative systems} into this
context, besides proposing a new method called \emph{Relativistic Gradient Descent} (RGD), based on a dissipative relativistic system; see Algorithm~\ref{rgd1}. RGD generalizes both the classical momentum (CM) or heavy ball method---given by \eqref{momgd2}---as well as Nesterov's accelerated gradient (NAG)---given by \eqref{nag2}; each of these methods are recovered as particular cases from RGD which has no additional computational cost compared to CM and NAG. Moreover, RGD has more flexibility, can interpolate between a conformal symplectic behaviour or introduce some  Hessian dependent damping in the spirit of  NAG,  and has potential to control instabilities due to large gradients by normalizing the momentum. In our experiments, RGD significantly
outperformed CM and NAG, specially in settings with large gradients or functions with a fast growth; besides Section~\ref{numerical} we report several additional examples in Appendix~\ref{sec:extra_numerical}.

We also elucidated what is the symplectic structure behind CM and NAG.
We found that
the former turns out to be a conformal symplectic integrator (Corollary~\ref{momgd_symp}),
thus being ``dissipative-preserving,'' while the latter introduces a spurious contraction of the symplectic form by a Hessian driven damping (Theorem~\ref{nag_contract_omega}).
This is an effect of second order in the step size but  may affect convergence and stability.
We pointed out a tradeoff between this extra contraction and
the stability of a conformal symplectic method.
We
also derived modified or perturbed equations for CM and NAG, describing these methods
to a higher degree of resolution; this analysis provides several new insights into these methods  and may form the basis for exploring these algorithms using different techniques compared to standard approaches in pure optimization.

On a higher level, this paper shows  how structure-preserving
discretizations of classical dissipative systems can be useful for studying existing optimization algorithms, as well as introduce new methods inspired by real  physical systems.
A thorough justification for the use of structure-preserving---or ``dissipative symplectic''---discretizations in this context was recently provided in \cite{Franca:2020} under great generality.

Finally, a more refined analysis of RGD is certainly an interesting future problem, though considerably
challenging due to the nonlinearity introduced by the $\sqrt{1+ \delta \| v\|^2}$ term in the updates
of Algorithm~\ref{rgd1}.
To give an example, even if one assumes a simple quadratic function $f(x) = (\lambda/2)  x^2$, the
differential equation \eqref{confrel} is nonlinear and does not admit a closed form solution, contrary to the differential
equation associated to CM and NAG which is linear and can be readily integrated. Thus, even
in continuous-time, the analysis for RGD is likely to be involved.
Finally, it would be interesting to consider RGD in a stochastic setting, namely investigate its diffusive properties in a random media, which may bring benefits
to nonconvex optimization and sampling.

\bigskip

\subsection*{Acknowledgments}
GF would like to thank Michael I. Jordan for discussions.
This work was supported by grants ARO MURI W911NF-17-1-0304,  
NSF 2031985, and NSF 1934931.

\bigskip

\appendix


\section{Order of accuracy of the general integrators} \label{proof_accuracy}

It is known that a composition of the type $\Psi_h^{A}\circ \Psi_h^{B}$, where $A$ and $B$ represents the components
of distinct vector fields, leads
to an integrator of order $r=1$, whereas a composition in the form
$\Psi_{h/2}^A \circ \Psi_h ^B \circ \Psi_{h/2}^A$ leads to an integrator of order $r=2$ \cite{Hairer}---the latter
is known as Strang splitting. However, here we provide an explicit and direct proof of these facts
for the generic integrators \eqref{gen_euler} and \eqref{gen_leap}, respectively.

\begin{proof}[Proof of Theorem~\ref{accuracy}]
From the equations of motion \eqref{confhameq} and Taylor expansions:
\begin{equation}
\label{taylorq}
\begin{split}
x(t_k + h) &= x + h \dot{x} + \dfrac{h^2}{2} \ddot{x} + O(h^3) \\
&= x + h \nabla_p H + \dfrac{h^2}{2}\left( \nabla^2_{xp} H \dot{x} + \nabla^2_{pp} H \dot{p}\right) + O(h^3) \\
&= x + h \nabla_p H + \dfrac{h^2}{2} \nabla^2_{xp} H \nabla_p H - \dfrac{h^2}{2} \nabla^2_{xp}\nabla_x H - \dfrac{h^2}{2} \gamma \nabla^2_{pp}H p + O(h^3),
\end{split}
\end{equation}
and
\begin{equation}
\label{taylorp}
\begin{split}
p(t_k + h) &= p + h \dot{p} + \dfrac{h^2}{2} \ddot{p} + O(h^3) \\
&= p - h \nabla_x H - h \gamma p + \dfrac{h^2}{2}\left( - \nabla^2_{xx} H \dot{x} - \nabla^2_{xp} H \dot{p} - \gamma \dot{p} \right) + O(h^3) \\
&= p - h \nabla_x H - h \gamma p -\dfrac{h^2}{2} \nabla^2_{xx}H \nabla_p H + \dfrac{h^2}{2}\nabla^2_{xp} H \nabla_x H + \dfrac{h^2}{2}\gamma \nabla^2_{xx}H p \\
& \qquad + \dfrac{h^2}{2} \gamma \nabla_x H + \dfrac{h^2}{2} \gamma^2 p + O(h^3),
\end{split}
\end{equation}
where we denote $x \equiv x(t_k)$ and $p\equiv p(t_k)$ for $t_k = k h$ ($k=0,1,\dotsc$),
and it is implicit that all gradients and Hessians of $H$ are being computed at $(x,p)$.

Consider \eqref{gen_euler}.
Under one step of this map, starting from
the point $(x,p)$, upon using Taylor expansions
we have
\begin{equation}
\label{expan1}
x_{k+1} = x + h\nabla_p H + O(h^2)
\end{equation}
and
\begin{equation}
\label{expan2}
p_{k+1} = e^{-\gamma h} p - h \nabla_x H + O(h^2)
= p - \gamma h p -h \nabla_x H(x, p) + O(h^2).
\end{equation}
Comparing these last two equations with \eqref{taylorq} and \eqref{taylorp} we conclude that
\begin{equation}
  x_{k+1} = x(t_{k} + h) + O(h^2), \qquad
  p_{k+1} = p(t_{k} + h) + O(h^2).
\end{equation}
Therefore, the discrete state approximates the continuum state up
to an error of $O(h^2)$,
 obeying Definition~\ref{order} with $r=1$.

The same approach is applicable to the numerical map \eqref{gen_leap}.
Expanding the first update:
\begin{equation}
\label{Xtilde_exp}
\begin{split}
\tilde{X} &= x + \dfrac{h}{2} \nabla_p H\big(x + (h/2) \nabla_p H, p - (h/2)\gamma p \big) + O(h^3), \\
&= x + \dfrac{h}{2} \nabla_p H + \dfrac{h^2}{4} \nabla^2_{xp}H \nabla_p H - \dfrac{h^2}{4} \gamma \nabla^2_{pp}H p + O(h^3).
\end{split}
\end{equation}
Expanding the second update:
\begin{equation}
\label{Ptilde_exp}
\begin{split}
\tilde{P} &= e^{-\gamma h /2} p - \dfrac{h}{2}  \nabla_x H\big(x + (h/2) \nabla_p H, p - (h/2)\gamma p \big) \\
&\qquad - \dfrac{h}{2} \nabla_xH\big(x + (h/2) \nabla_p H, p - (h/2)\gamma p
- h \nabla_x H\big)  + O(h^3), \\
&= e^{-\gamma h/2} p  -h \nabla_x H - \dfrac{h^2}{2}\nabla^2_{xx}H \nabla_p H + \dfrac{h^2}{2}\gamma \nabla^2_{xp} H p + \dfrac{h^2}{2} \nabla^2_{xp} H \nabla_x H + O(h^3).
\end{split}
\end{equation}
Making use of \eqref{Xtilde_exp} and \eqref{Ptilde_exp} we thus find:
\begin{equation}
\label{X_exp}
\begin{split}
X &= \tilde{X} + \dfrac{h}{2}\nabla_p H(\tilde{X}, \tilde{P}) \\
&= x + \dfrac{h}{2} \nabla_p H + \dfrac{h^2}{4} \nabla^2_{xp} H \nabla_p H - \dfrac{h^2}{4}\gamma \nabla^2_{pp} H p \\
& \qquad + \dfrac{h}{2} \nabla H\big(x + (h/2)\nabla_p H, p - (h/2)\gamma p - h \nabla_x H\big) + O(h^3) \\
&= x + h \nabla_p H + \dfrac{h^2}{2}\nabla^2_{xp} H \nabla_p H - \dfrac{h^2}{2}\gamma \nabla_{pp}^2 H p - \dfrac{h^2}{2}\nabla_{pp}^2 H \nabla_x H + O(h^3).
\end{split}
\end{equation}
Comparing with \eqref{taylorq} we conclude that
\begin{equation}
x_{k+1}  = x(t_k + h) + O(h^3).
\end{equation}
Finally, from \eqref{Ptilde_exp} we have
\begin{equation}
\begin{split}
P &= e^{-\gamma h/2} \tilde{P} \\
&= e^{\gamma h} p - e^{-\gamma h/2}\Big\{
 h \nabla_x H + \dfrac{h^2}{2} \nabla^2_{xx} H \nabla_p H + \dfrac{h^2}{2} \gamma \nabla_{xp}^2 H p - \dfrac{h^2}{2} \nabla^2_{xp} H \nabla_x H
\Big\} + O(h^3)\\
&= p -\gamma h p + \dfrac{h^2}{2} \gamma^2 p - h \nabla_x H - \dfrac{h^2}{2} \nabla^2_{xx} H \nabla_p H + \dfrac{h^2}{2}\gamma \nabla_{xp}^2 H p \\ &\qquad + \dfrac{h^2}{2}\nabla^2_{xp} H \nabla_{x}H + \dfrac{h^2}{2} \gamma \nabla_x H + O(h^3).
\end{split}
\end{equation}
Comparing this with \eqref{taylorp} implies
\begin{equation}
p_{k+1} = p(t_k + h) + O(h^3).
\end{equation}
Therefore, in this case we satisfy Definition~\ref{order}
with $r=2$.
\end{proof}

From the above general results it is immediate that:
\begin{itemize}
\item CM \eqref{momgd2}---or equivalently \eqref{simpleap} which is more appropriate to make connections with the  continuous-time system---is a first order integrator
to the conformal Hamiltonian system \eqref{confhameq} with the classical
 Hamiltonian \eqref{hamiltonian1}; the equations of motion are explicitly given by \eqref{class_harm}.
\item The relativistic extension of CM given by \eqref{rgd0} is a first order
 integrator to the conformal relativistic Hamiltonian system \eqref{confrel}.
 \item  RGD \eqref{rgd} with $\alpha=1$---also equivalently written as Algorithm.~\ref{rgd1}---is a second order integrator to system \eqref{confrel}.
\end{itemize}

\section{Additional numerical experiments} \label{sec:extra_numerical}

Here we  compare RGD (Algorithm~\ref{rgd1}) with CM \eqref{momgd2} and NAG \eqref{nag2}
on several additional test functions;
for details on these functions see e.g. \cite{Jamil:2013} and references therein.  We follow the  procedure already described in Section~\ref{numerical} where we optimized the hyperparameters of these algorithm using Bayesian optimization.\footnote{We provide the actual code used in our numerical simulations in \cite{code}.}
 We report the convergence rate using the best parameters found together with histograms of the parameter search. In all cases we initialize the velocity as $v_0 = 0$.
The initial position $x_0$ was chosen inside the range where the corresponding test function
is usually considered.

First we consider functions with a quadratic growth. These results are shown in Figs.~\ref{fig:booth}--\ref{fig:sumsquares}.  In this case RGD performed similarly to CM and NAG, although with some improvement. In any case RGD proved to be more stable, i.e. it worked well for a wider range of hyperparameters.

We expect that RGD stands out on settings with large gradients or objective functions with  fast growing tails. Therefore, in the remaining figures, i.e.  Fig.~\ref{fig:beale}--\ref{fig:rosenbrock}, we consider  more challenging optimization problems with functions that grow stronger than a quadratic.  For some of these problems the minimum lies on a flat valley, making it hard for an algorithm to stop around the minimum after gaining a lot of speed from a very steep descent direction.  Note that in all these cases the improvement of RGD over CM and NAG is significant, and the parameter $\delta$---which controls  relativistic effects---had an important role.
The conformal symplecticity, which is indicated by the tendency $\alpha \to 1$, also brings an improved stability in the discretization.
These results provide compelling evidence for the benefits of RGD.

\bigskip

\begin{figure}[!h]
\centering
\includegraphics[width=0.33\textwidth,trim={40 0 10 0}]{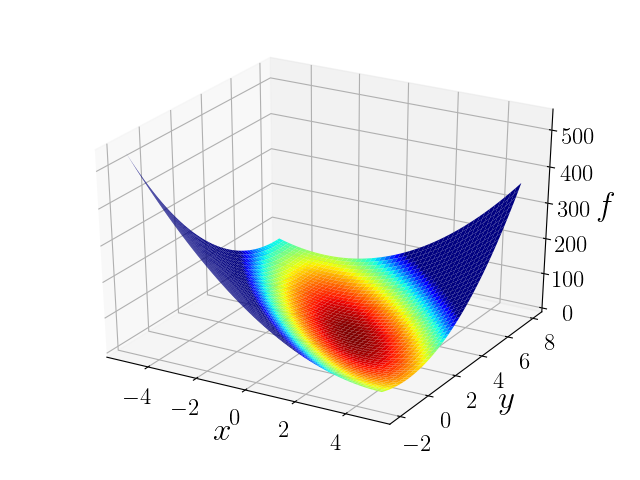}%
\includegraphics[width=0.66\textwidth]{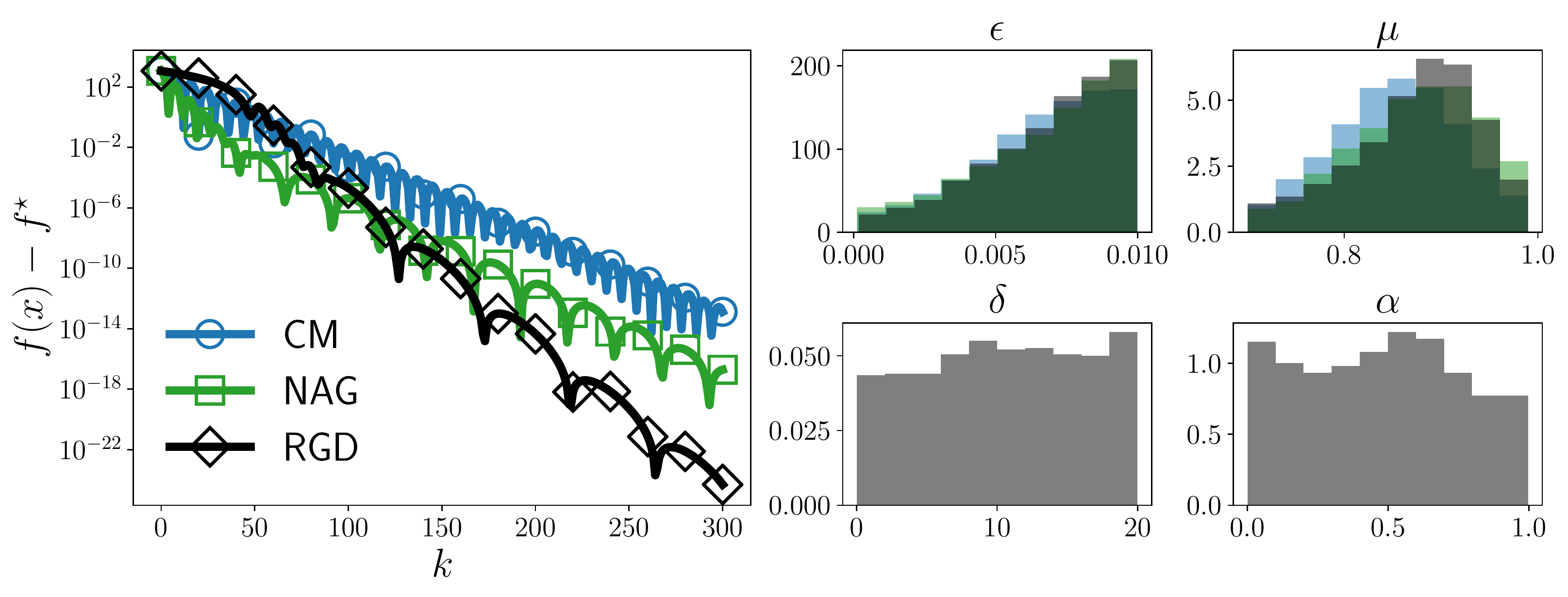}
\caption{\label{fig:booth} \emph{Booth function}: $f(x,y) \equiv (x+2y-7)^2 + (2x+y-5)^2$. Global minimum at $f(1,3) = 0$. We initialize at $x_0 = (10,10)$. This function is usually evaluated on the region $-10 \le x,y \le 10$. All methods perform well on this problem which is not challenging.}
\end{figure}
\begin{figure}[!h]
\centering
\includegraphics[width=0.33\textwidth,trim={10 0 40 0}]{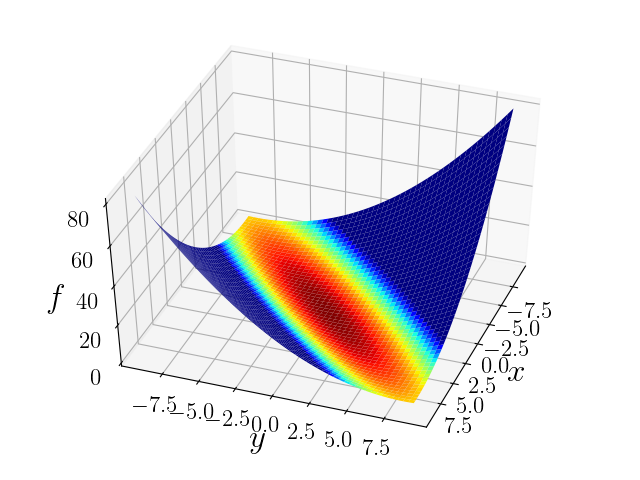}%
\includegraphics[width=0.66\textwidth]{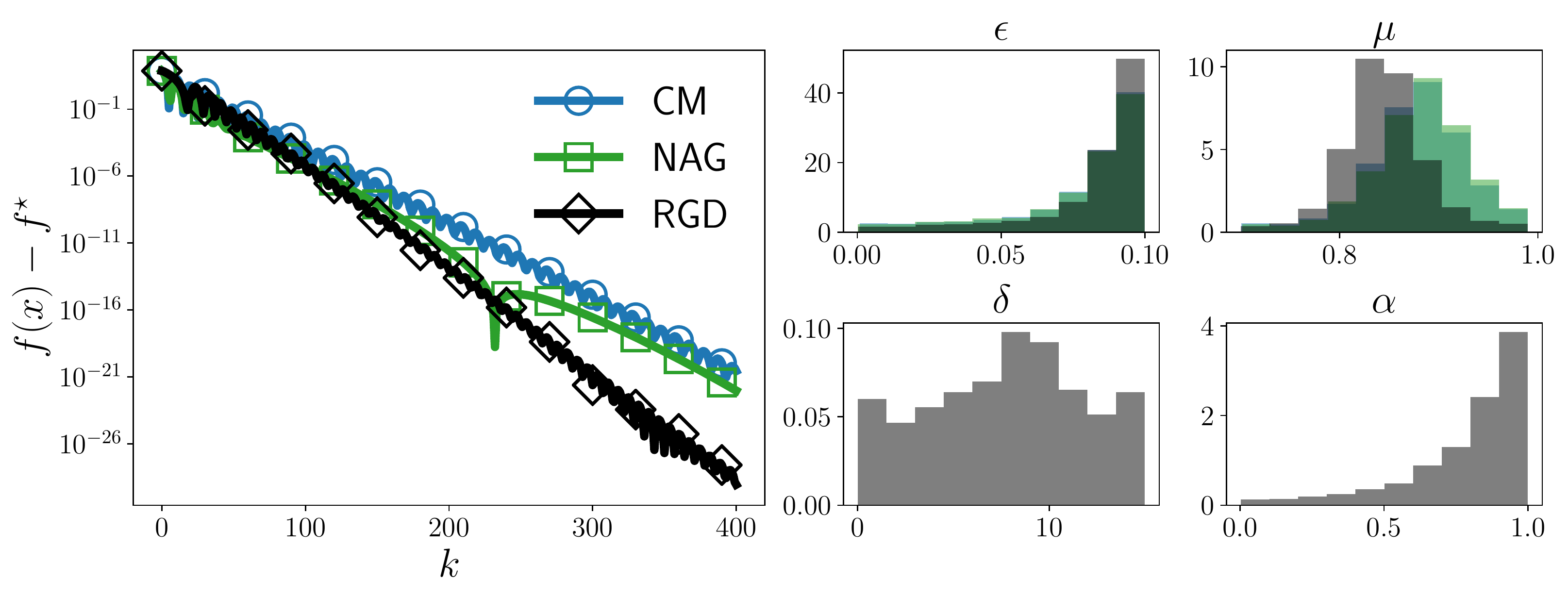}
\caption{\label{fig:matyas} \emph{Matyas function}: $f(x,y) \equiv 0.26(x^2+y^2) - 0.48 x y$. Global minimum is at $f(0,0) = 0$. We initialize at $x_0 = (10,-7)$. This function is usually evaluated on the region $-10 \le x,y \le 10$. Even though the function has a---not so strong---quadratic growth, we see a slight improvement of RGD; note $\delta > 0$. Note also the ``symplectic tendency'' $\alpha \to 1$.}
\end{figure}
\begin{figure}[!h]
\centering
\includegraphics[width=0.33\textwidth,trim={40 0 0 0}]{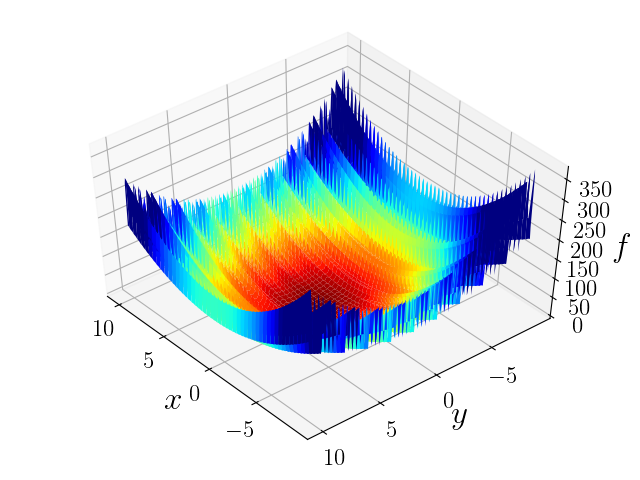}%
\includegraphics[width=0.66\textwidth]{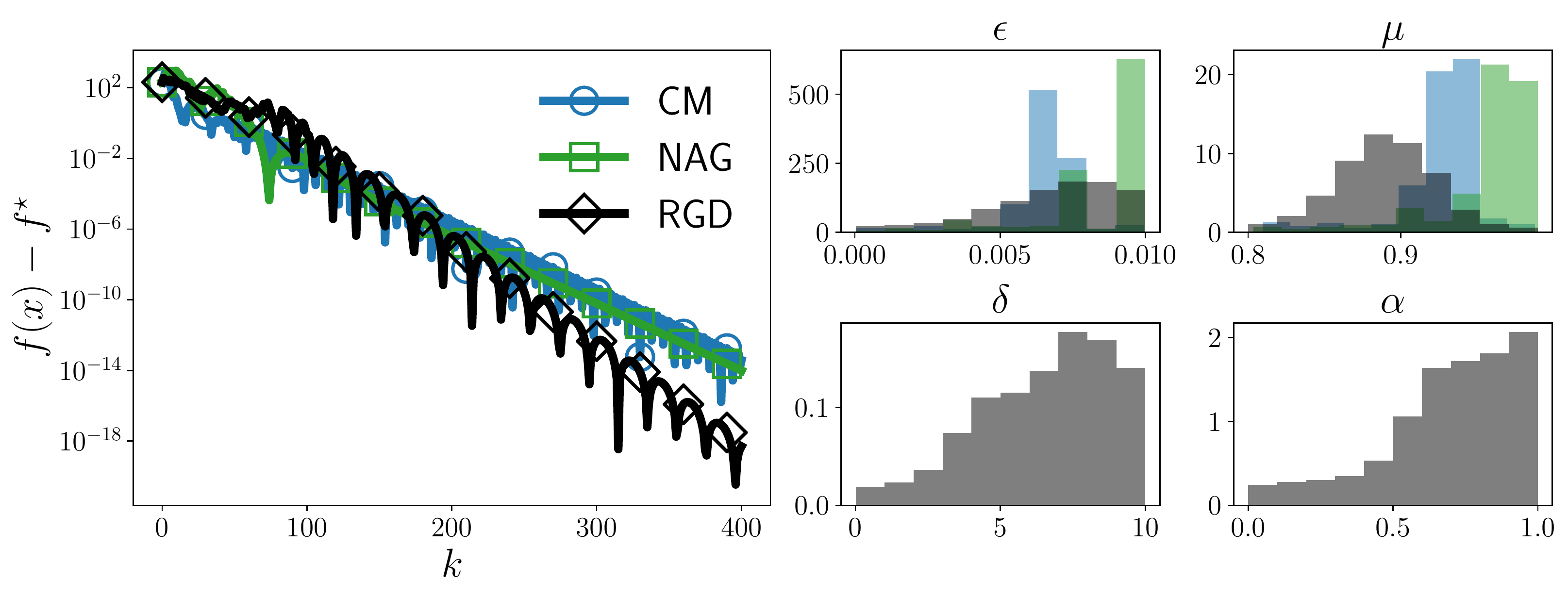}
\caption{\label{fig:levi13} \emph{L\' evi function \#13}: $f(x,y) \equiv \sin^2 3 \pi x + (x-1)^2(1+\sin^2 3\pi y) + (y-1)^2(1+\sin^2 2\pi y)$. It is multimodal with the global minimum at $f(1,1) = 0$. We initialize at $x_0 = (10,-10)$. This function is usually studied on the region $-10 \le x,y \le 10$.  Although this function is nonconvex, the optimization problem is not very challenging. However, we noticed that CM and NAG got stuck on a local minimum more often than RGD when running this example multiple times.
}
\end{figure}
\begin{figure}[!h]
\centering
\includegraphics[width=0.33\textwidth,trim={10 0 40 0}]{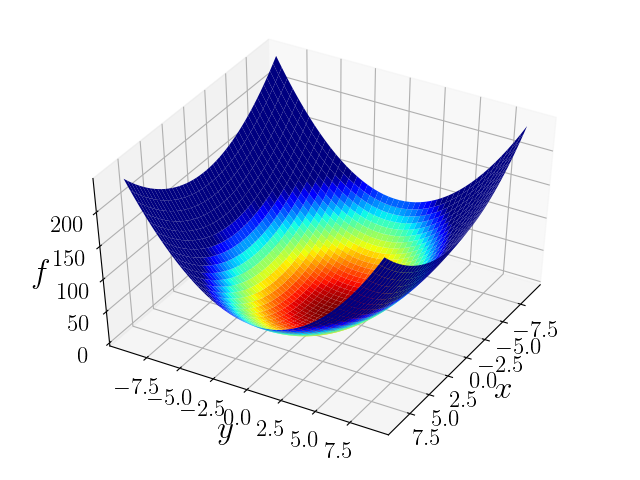}%
\includegraphics[width=0.66\textwidth]{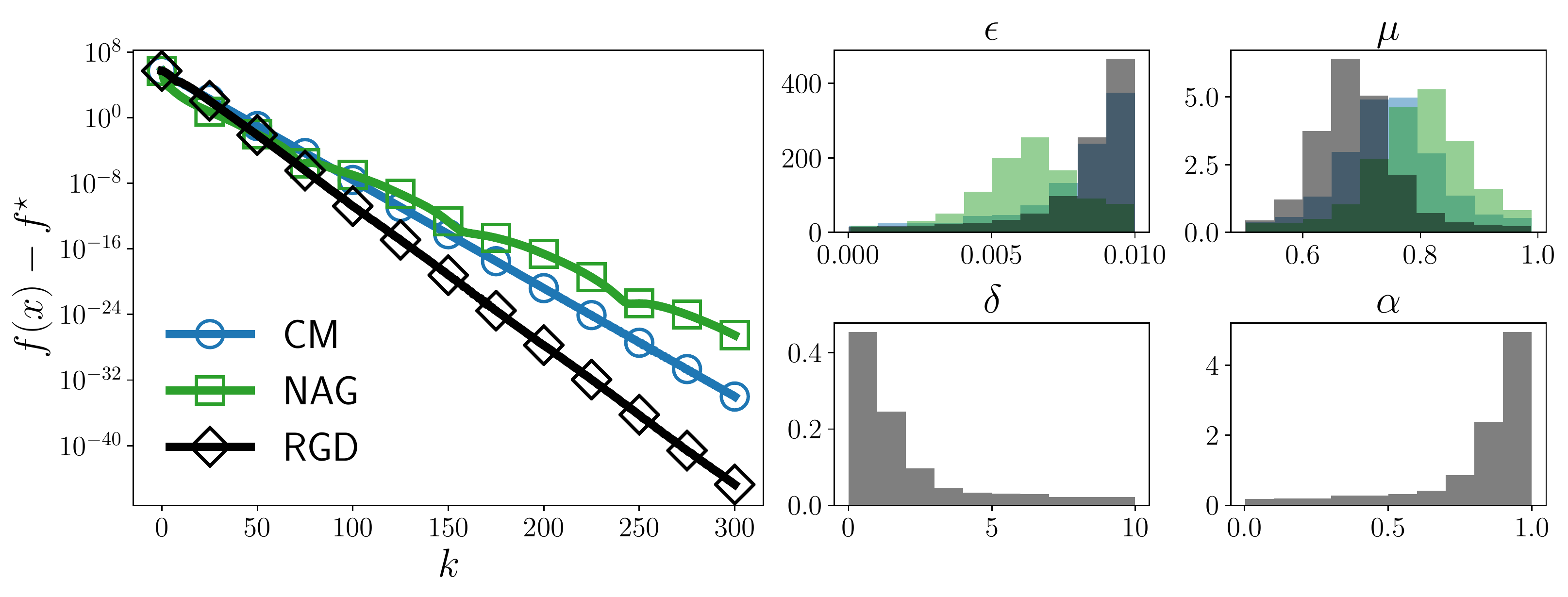}
\caption{\label{fig:sumsquares} \emph{Sum of squares}: $f(x) \equiv \sum_{i=1}^n i x_i^2$. The minimum is at $f(0) = 0$. We consider $n=100$ dimensions and initialize at $x_{0} = (10, \dotsc,10)$. The usual region of study is $-10 \le x_i \le 10$.
Note that there is a clear tendency towards $\alpha \to 1$ in this case, i.e. in being conformal symplectic.}
\end{figure}
%
\begin{figure}[!h]
\centering
\includegraphics[width=0.33\textwidth,trim={10 0 40 0}]{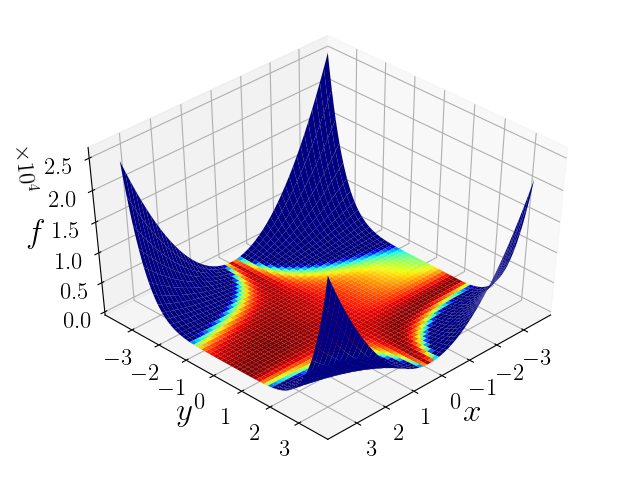}%
\includegraphics[width=0.66\textwidth]{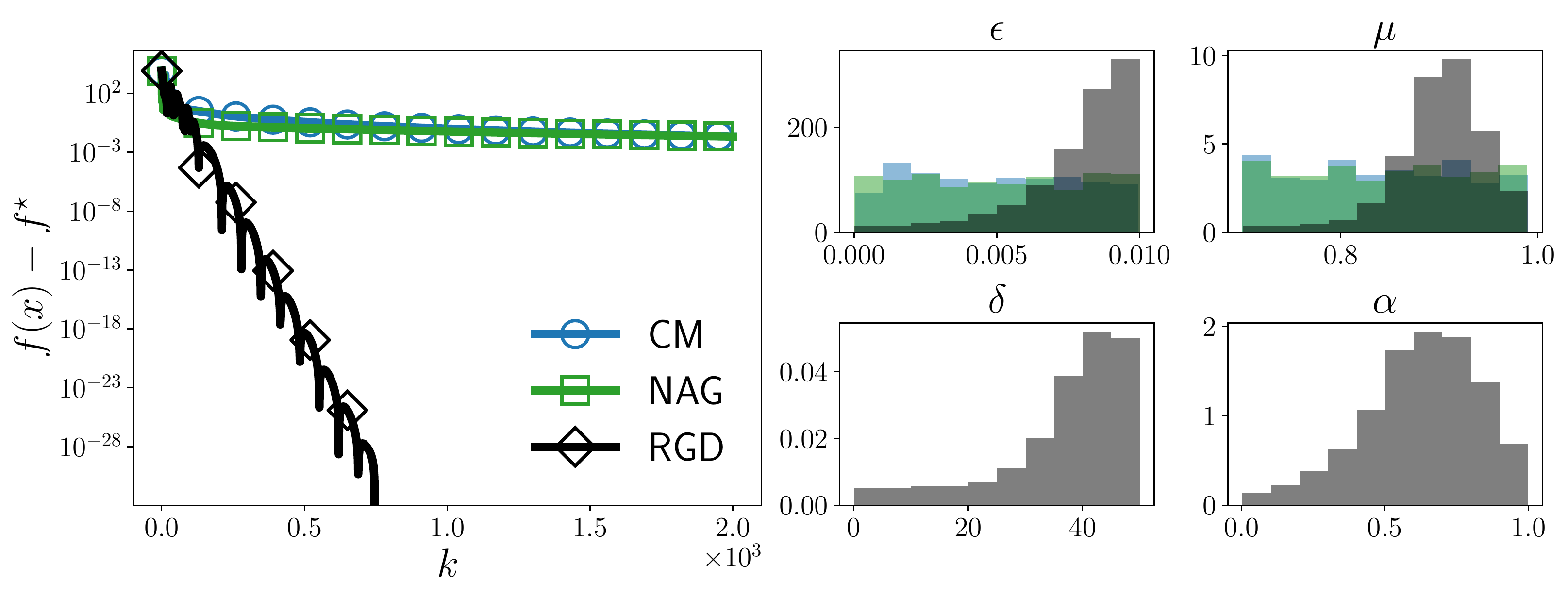}
\caption{\label{fig:beale} \emph{Beale function}: $f(x,y) \equiv (1.5-x + xy)^2 + (2.25 - x + xy^2)^2 + (2.625-x+xy^3)^2$. The global minimum is at $f(3,1/2) = 0$, lying on a flat and narrow valley which makes optimization challenging. Note also that this functions grows stronger than a quadratic.
This function is usually considered on the region $-4.5 \le x,y \le 4.5$.
We initialize at $x_0 = (-3,-3)$.
Note how CM and NAG were unable to minimize the function, while RGD was able to find the global minimum to high accuracy; $\delta \gg 0$ played a predominant role, indicating benefits from ``relativistic effects.''}
\end{figure}
\begin{figure}[!h]
\centering
\includegraphics[width=0.33\textwidth,trim={0 0 40 0}]{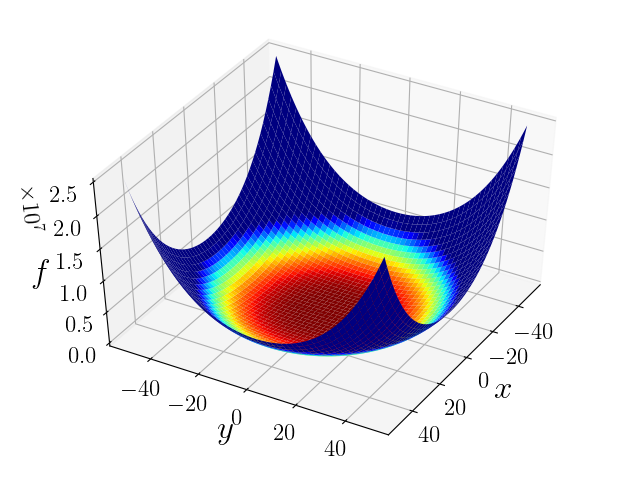}%
\includegraphics[width=0.66\textwidth]{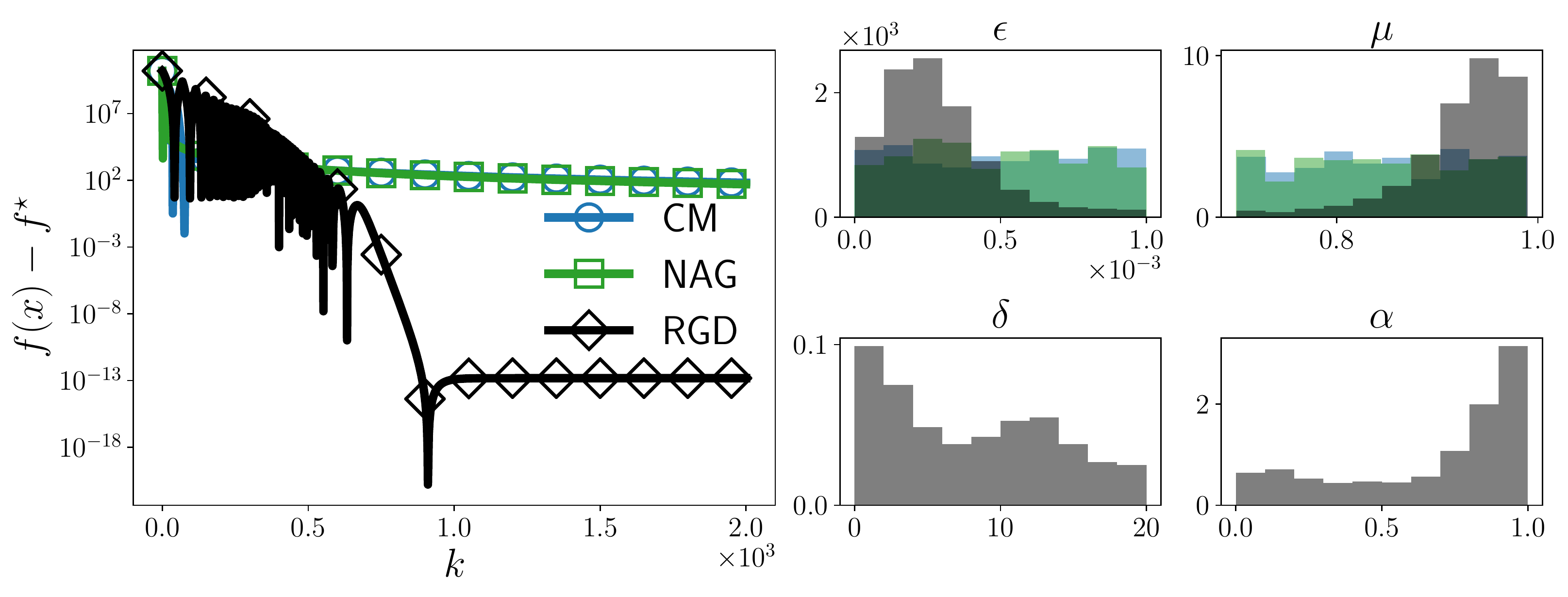}
\caption{\label{fig:chung} \emph{Chung-Reynolds function}: $f(x) \equiv \left(\sum_{i=1}^n x_i^2\right)^2$. The global minimum is at $f(0) = 0$. This function is usually considered on the region $ -100 \le x_i \le 100$. We consider $n=50$ dimensions and initialize at
$x_0=(50,\dotsc,50)$.  Note that RGD was able to improve convergence by controlling the kinetic energy with $\delta > 0$.  We also see the benefits of being conformal symplectic, i.e. $\alpha \to 1$.
}
\end{figure}
\begin{figure}[!h]
\centering
\includegraphics[width=0.33\textwidth,trim={0 0 40 0}]{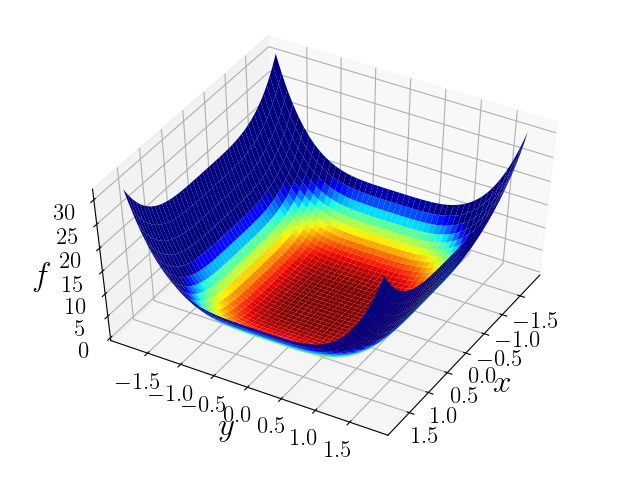}%
\includegraphics[width=0.66\textwidth]{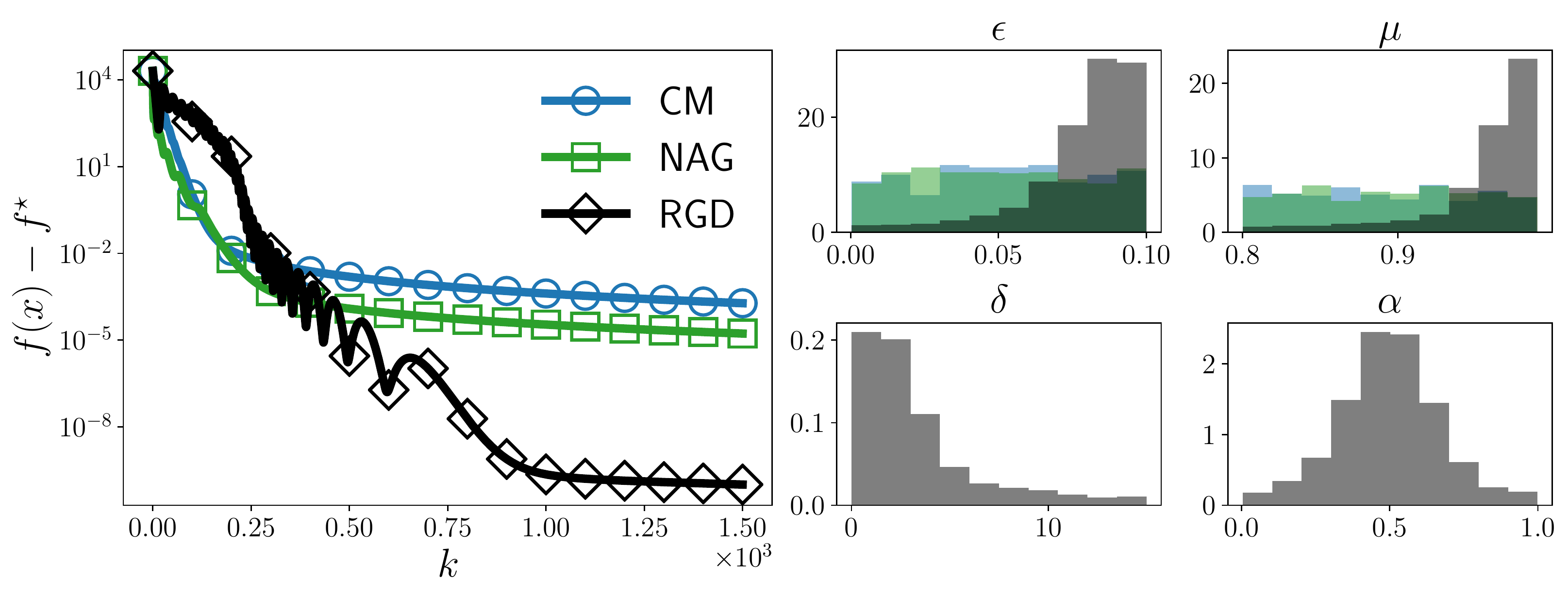}
\caption{\label{fig:quartic} \emph{Quartic function}: $f(x) \equiv \sum_{i=1}^n i x_i^4$. The global minimum is at $f(0) = 0$. This function is usually considered over $-1.28 \le x_i \le 1.28$. We choose $n=50$ dimensions and initialize at $x_0 = (2,\dotsc,2)$.}
\end{figure}
\begin{figure}[!h]
\centering
\includegraphics[width=0.33\textwidth,trim={0 0 40 0}]{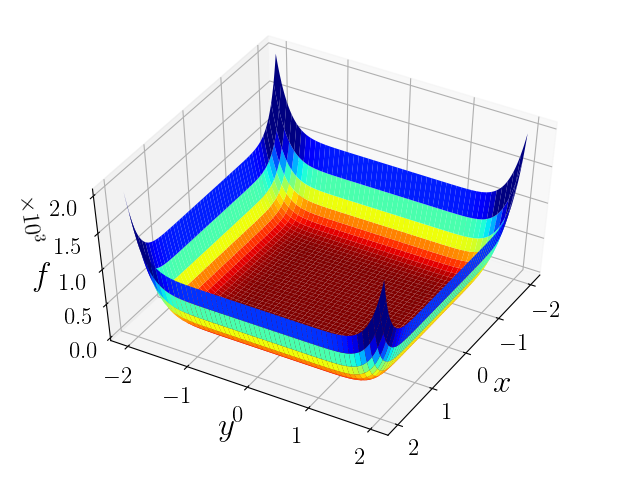}%
\includegraphics[width=0.66\textwidth]{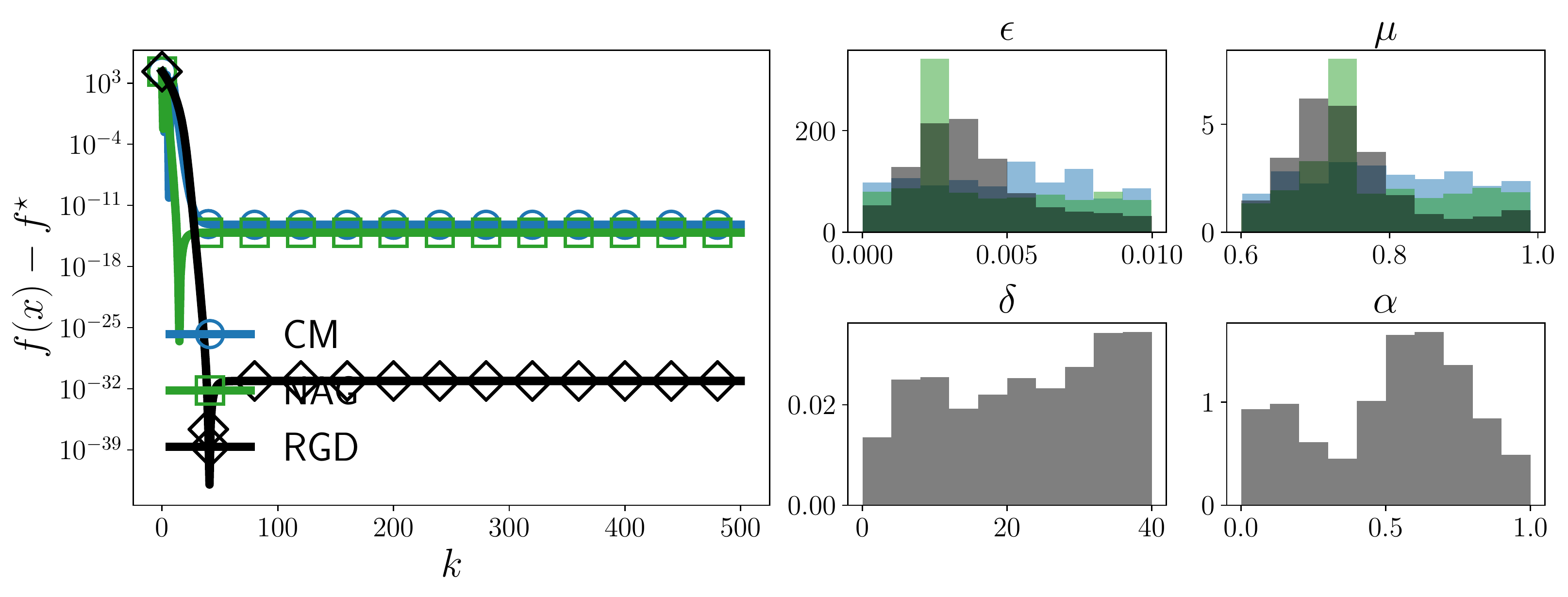}
\caption{\label{fig:schwefel} \emph{Schwefel function}: $f(x) \equiv \sum_{i=1}^n x_i^{10}$. The minimum is at $f(0) = 0$. The function is usually considered over $-10 \le x_i \le 10$. This function grows even stronger than the previous two cases. We consider $n=20$ dimensions and initialize at $x_0 = (2,\dotsc,2)$. Note that $\delta > 0$ is essential to control the kinetic energy and improve convergence.}
\end{figure}
\begin{figure}[!h]
\centering
\includegraphics[width=0.33\textwidth,trim={0 0 40 0}]{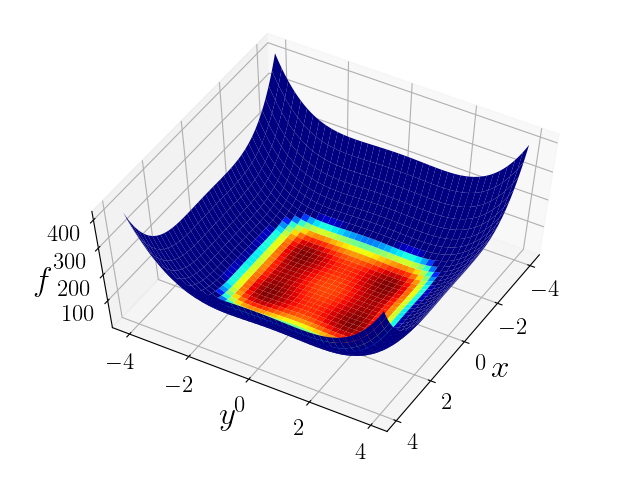}%
\includegraphics[width=0.66\textwidth]{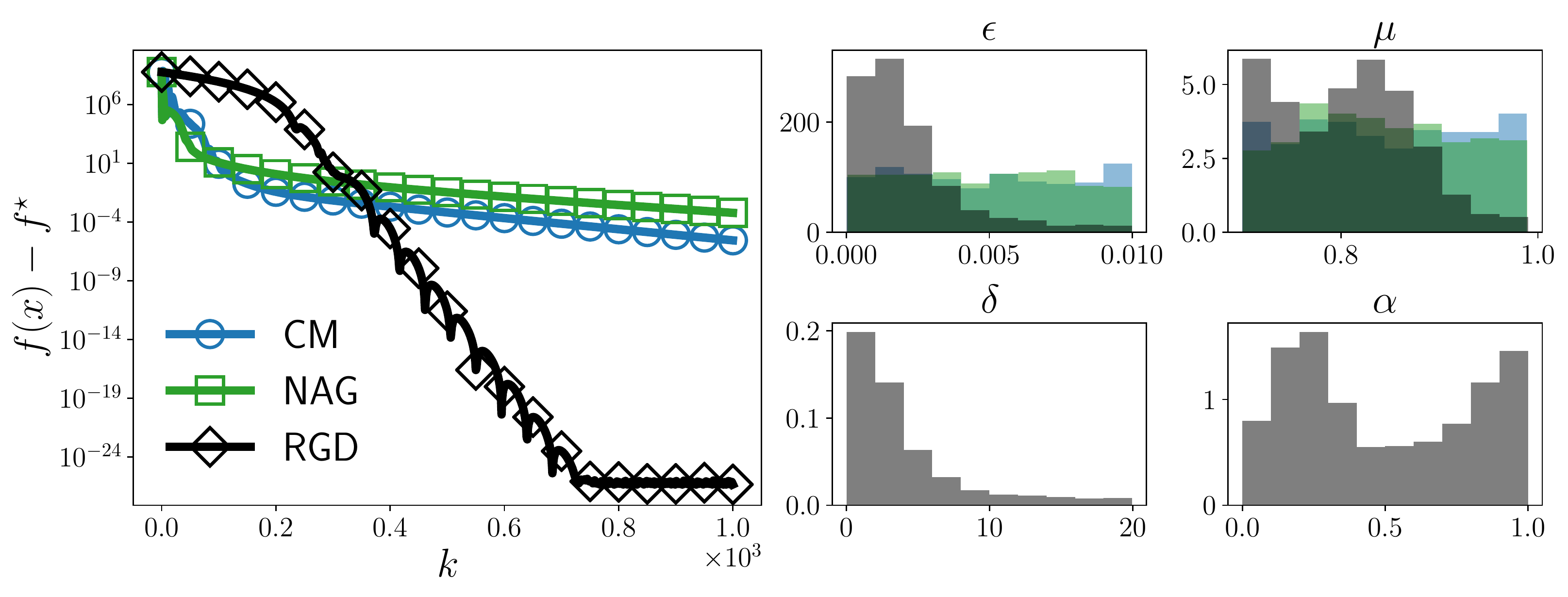}
\caption{\label{fig:qing} \emph{Qing function}: $f(x) \equiv \sum_{i=1}^n (x_i^2 - i)^2$. This function is multimodal, with minimum at $x^\star_i = \pm \sqrt{i}$, $f(x^\star) = 0$. The function is usually studied in the region $-500 \le x_i \le 500$.  We consider $n=100$ dimensions with initialization at $x_0 = (50,\dotsc,50)$.
}
\end{figure}
\begin{figure}[!h]
\centering
\includegraphics[width=0.33\textwidth,trim={0 0 40 0}]{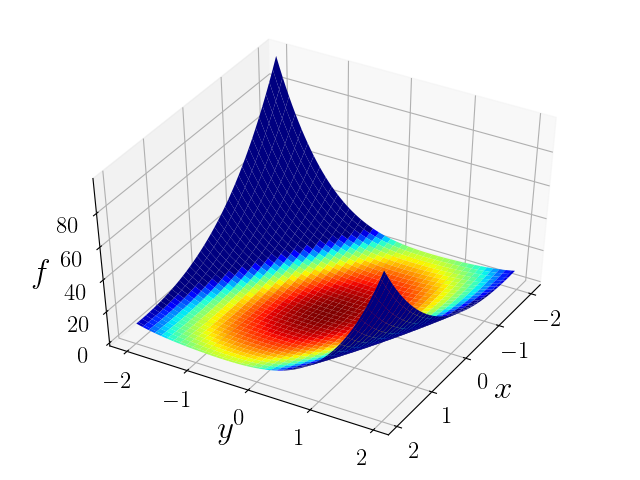}%
\includegraphics[width=0.66\textwidth]{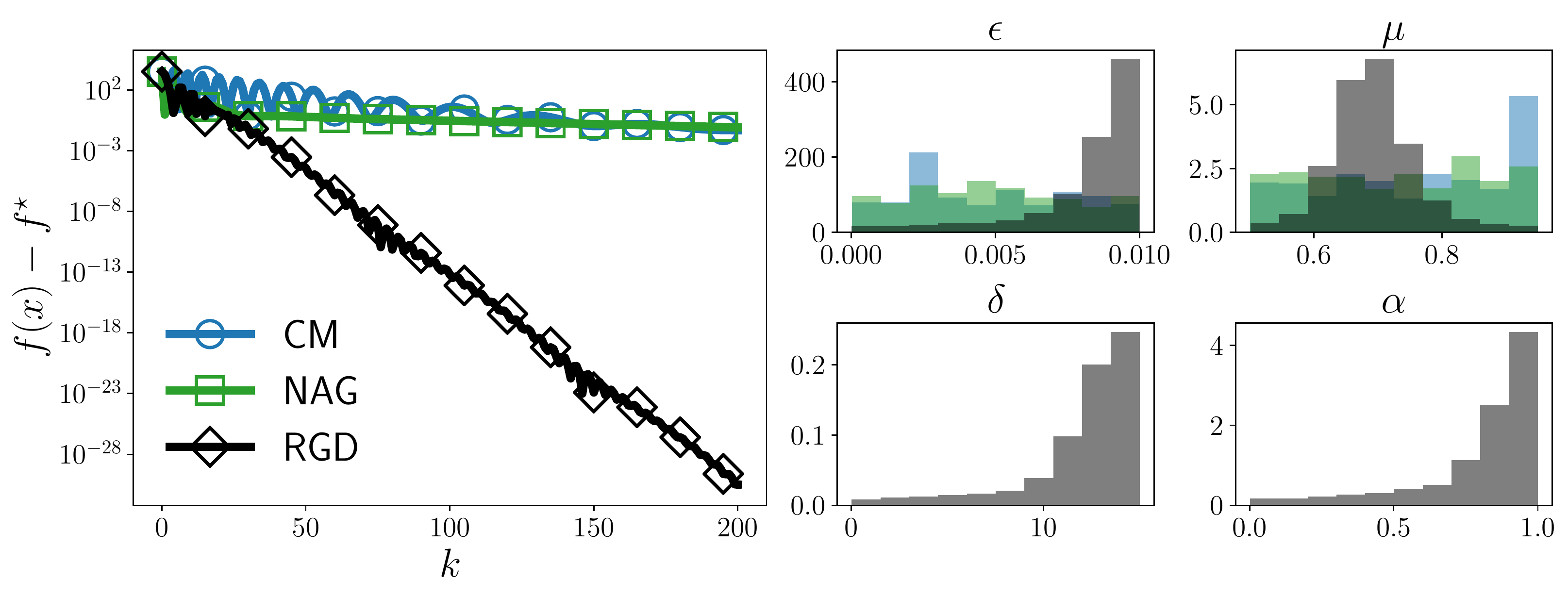}
\caption{\label{fig:zakharov} \emph{Zakharov  function}: $f(x) \equiv \sum_{i=1}^n x_i^2 + \left(\tfrac{1}{2} \sum_{i=1}^n i x_i\right)^2 + \left(\tfrac{1}{2} \sum_{i=1}^n i x_i\right)^4$. The minimum is at $f(0) = 0$. The region of interest is
usually $-5 \le x_i \le 10$. We consider $n=5$ and initialize at $x_0 = (1,\dotsc,1)$. Note that $\delta > 0$ played a dominant role here, and $\alpha \to 1$ as well. RGD successfully minimized this function to high accuracy, contrary to CM and NAG that were unable to get even close to the minimum.}
\end{figure}

\begin{figure}[!h]
\centering
\includegraphics[width=0.33\textwidth,trim={0 0 40 0}]{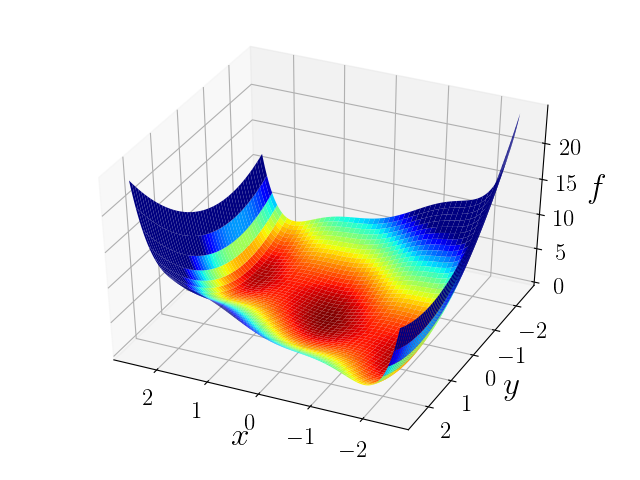}%
\includegraphics[width=0.66\textwidth]{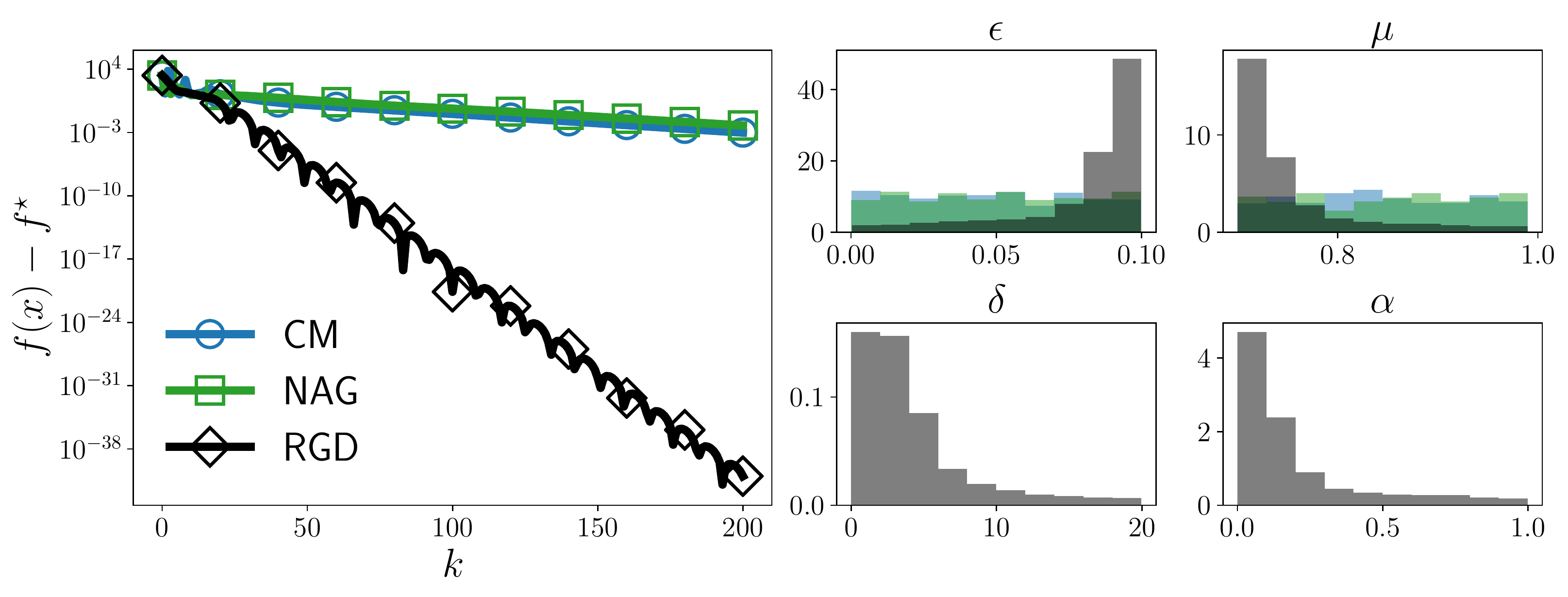}
\caption{\label{fig:camel} Three-hump  camel  back function: $f(x, y) \equiv 2x^2 - 1.05 x^4 + x^6/6 + xy + y^2$. This is a multimodal function with global minimum is at $f(0,0) = 0$. The region of interest is usually $-5 \le x,y \le 5$.
We initialize at $x_0=(5,5)$. The two local minima are somewhat close to the global minimum which makes optimization challenging. Only RGD was able to minimize the function.}
\end{figure}
\FloatBarrier
\begin{figure}[!h]
\centering
\includegraphics[width=0.33\textwidth,trim={0 0 40 0}]{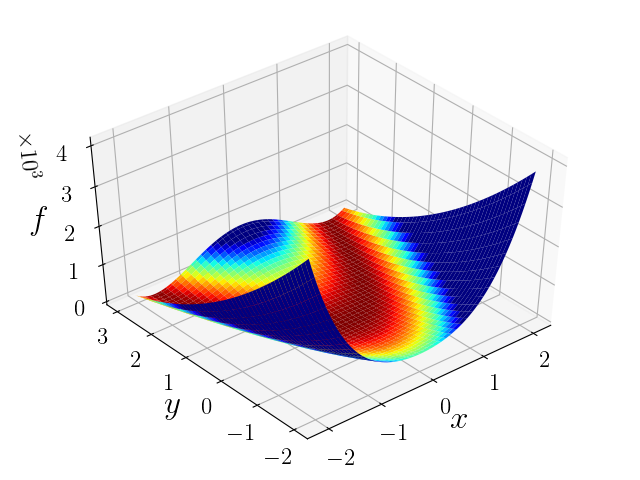}%
\includegraphics[width=0.66\textwidth]{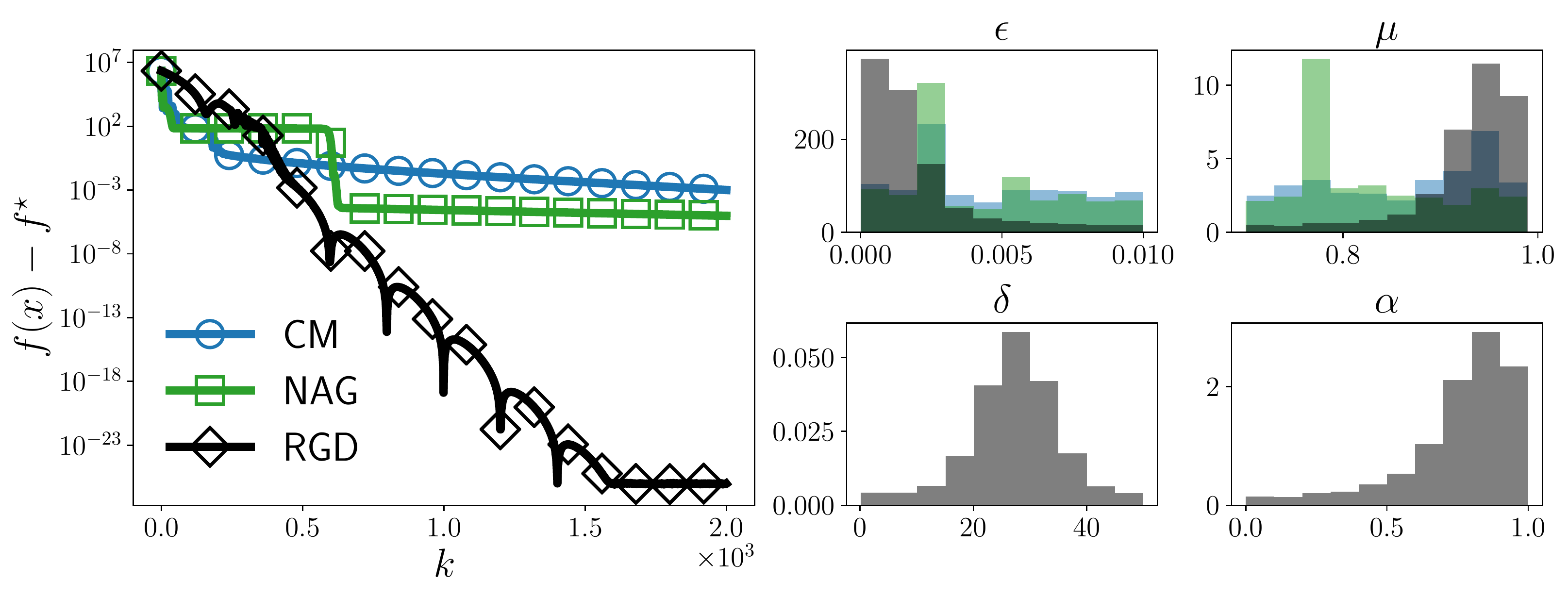}
\caption{\label{fig:rosenbrock} \emph{Rosenbrock  function}: $f(x) \equiv \sum_{i-1}^{n-1} \left( 100(x_{i+1}-x_i^2)^2  + (x_i - 1)^2  \right)$. The global  minimum is at $f(1,\dotsc,1) = 0$. More details about this function was described in Section~\ref{numerical}.  Here we consider $n=1000$ dimensions and initialize at $x_0 = (2.048,\dotsc,2.048)$.
This function is usually studied in the region $-2.048 \le x_i \le 2.048$. Note that $\delta > 0$ was important
for the improved convergence.}
\end{figure}

\bibliography{biblio.bib}

\end{document}